\newcommand*{\rom}[1]{\expandafter\@slowromancap\romannumeral #1@}
\theoremstyle{definition}
\newtheorem{fact}{fact}
\newtheorem{thm}[fact]{Theorem}
\newtheorem{lemma}[fact]{Lemma}
\newtheorem{prop}[fact]{Proposition}
\newtheorem{corollary}[fact]{Corollary}
\newtheorem{defini}[fact]{Definition}
\title{Models of $Th(\mathbb{N})$ are $IPs$ of nice $RCFs$}
\author{Merlin Carl}
\begin{document}

\begin{abstract}
 Exploring further the connection between exponentiation on real closed fields and the existence of an integer part modelling strong fragments of arithmetic,
we demonstrate that each model of true arithmetic is an integer part of an exponential real closed field that is elementary equivalent to the reals with exponentiation.
\end{abstract}

\maketitle

\section{Introduction}

This work originates in \cite{DKS}, where it was shown that a countable real closed field $K$ has an integer part modelling $PA$ iff it is recursively saturated.
 Marker (see \cite{MS}) gave a counterexample in the uncountable case by lifting exponentiation from the model to every real closed field of which it is an integer part.
This was refined in \cite{CDK} to the theorem that real closed fields with $IP$s modelling $I\Delta_{0}+EXP$ always allow left exponentiation. It is natural to ask what influence a model of arithmetic
has on the spectrum of real closed fields of which it is an $IP$. We show that models of true arithmetic are always $IP$s of real closed fields that are very similar to the reals in a model-theoretic sense.
We also show that this fails if one replaces true arithmetic with bounded arithmetic ($I\Delta_{0}$). We conjecture that Peano arithmetic is actually enough to achieve our results. We don't know
where the exact benchmark is.\\

Notation: If $\vec{v}=(v_1,...,v_n)$, $\vec{v}\in M$ means that $\vec{v}$ is a sequence of elements of $M$.

\section{The $M$-definable reals}

The idea behind the following construction is to define $M$-reals as equivalence classes of convergent sequences of elements of the fraction field $\text{ff}(M)$ of $-M\cup M$.

\begin{defini} Let $M\models Th(\mathbb{N})$. A pre-real over $M$ is a function $f:M\rightarrow (M\cup -M)\times M-\{0\}$ with definable graph, i.e. such that there is an $\mathfrak{L}_{PA}$-formula 
$\psi(x,y,z,\vec{p})$ and a finite sequence $\vec{v}\subseteq M$  such that $M\models\psi(x,y,z,\vec{v})$ iff $f(x)=(y,z)$.\end{defini}

\textbf{Remark}: Strictly speaking, this would not allow the first element of an element of the image to be in $-M$. This can be solved by a convention stating e.g. that $2n+1$ denotes $-n$ while $2n$ denotes $n$.
Since this does not cause any principal difficulties, we will, by slight abuse of notation, ignore this subtlety.\\
Also, when $M$ is clear from the context, we will drop "over $M$". The mentioning of the parameter sequence $\vec{v}$ will usually also be dropped.

\begin{defini} A pre-real over $M$ given by some formula $\psi(x,y,z)$ is zero iff $M\models\forall{m}\exists{n}\forall{k>n}(\psi(k,a,b)\implies ma<b)$. 
It is convergent iff \begin{center} $M\models\forall{m}\exists{n}\forall{k_{1},k_{2}>n}(\psi(k_{1},a_{1},b_{1})\wedge\psi(k_{2},a_{2},b_{2})\implies m(a_{1}b_{2}-a_{2}b_{1})<a_{2}b_{2})$.\end{center} 
A convergent pre-real over $M$ is an $M$-real. Two $M$-reals $x_{1},x_{2}$ given by $\psi_{1}$ and $\psi_{2}$ are equivalent, written $x_{1}\sim x_{2}$ iff
$M\models\forall{m}\exists{n}\forall{k>n}(\psi_{1}(k,a,b)\wedge\psi_{2}(k,c,d)\implies m(ad-bc)<bd)$. Let $[x]_{\sim}$ denote the $\sim$-equivalence class of $x$ when $x$ is an $M$-real.
Finally, we set $K_{M}:=\{[x]_{\sim}|x$ is an $M$-real$\}$. If $n\in M$, then $n_{K}$ denotes the equivalence class of the constant function on $M$ which takes the value $n$ everywhere; the subscript
$K$ is dropped wherever possible.\end{defini}

\begin{defini} Let $x$ and $y$ be $M$-reals. Then we write $x<y$ iff there exist $m,k\in M$ such that, for all $l>k\in M$, $mx_{l}+1<my_{l}$.\end{defini}

From now on, we are almost exclusively interested in arithmetic formulas $\phi(\vec{v},x,y)$ that define an $M$-real for every $\vec{v}$, i.e. such that 
\begin{center} $Th(\mathbb{N})\models\forall{\vec{v}}`\phi(\vec{v},x,y)$ \end{center}
defines
a total function from naturals to pairs of naturals with second element $\neq0$ and this function gives rise to a convergent sequence'. Let us call such a formula $\phi$ a `safe' formula.\\
What we would like to do is assume that all occuring formulas, unless stated otherwise, are of this kind. However, we have to ensure that by this restriction, we do not loose any $M$-reals.

\begin{lemma}
 For any $\mathfrak{L}_{PA}$-formula $\phi(\vec{v},x,y)$, there exists an $\mathfrak{L}_{PA}$-formula $\phi^{\prime}(\vec{v},x,y)$ such that it is a theorem of $Th(\mathbb{N})$ that, for every parameter $\vec{v}$, $\phi$ and $\phi^{\prime}$ 
define the same function if $\phi$ defines a convergent total function and otherwise $\phi^{\prime}$ defines the constant $0$ function.
\end{lemma}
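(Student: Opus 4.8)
The plan is to construct $\phi'$ by an explicit case split, exploiting the fact that every requirement making $\phi$ ``safe'' at a given parameter tuple $\vec v$ is itself expressible by a single $\mathfrak{L}_{PA}$-formula in $\vec v$. Throughout I read the second payload variable $y$ of $\phi(\vec v,x,y)$ as a code for a pair $(a,b)\in(M\cup -M)\times M$ via the fixed definable pairing function, together with the $2n/2n+1$ sign convention recorded in the Remark.

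First I would isolate a formula $\mathrm{Good}(\vec v)$ asserting that $\phi(\vec v,\cdot,\cdot)$ is the defining formula of a convergent pre-real at $\vec v$. Concretely, $\mathrm{Good}(\vec v)$ is the conjunction of: totality and functionality, $\forall x\,\exists! y\,\phi(\vec v,x,y)$; the range condition that for every $x$ the decoded second coordinate $b$ of the output is nonzero; and the Cauchy clause displayed in the definition of an $M$-real above, transcribed through the pairing so that the coordinates $a_i,b_i$ of the relevant outputs are decoded from their codes. Each clause is a genuine $\mathfrak{L}_{PA}$-formula, hence so is their conjunction.

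Then I would set
\[
\phi'(\vec v,x,y)\;:=\;\bigl(\mathrm{Good}(\vec v)\wedge\phi(\vec v,x,y)\bigr)\;\vee\;\bigl(\neg\mathrm{Good}(\vec v)\wedge Z(x,y)\bigr),
\]
where $Z(x,y)$ is the formula $y=\langle 0,1\rangle$ encoding the constant value $(0,1)$. When $\mathrm{Good}(\vec v)$ holds the second disjunct is vacuous and $\phi'\leftrightarrow\phi$, so the two formulas define the same function; when $\mathrm{Good}(\vec v)$ fails the first disjunct is vacuous and $\phi'\leftrightarrow Z$, which defines the constant $0$ function. In either case $\phi'$ is safe, since $\phi$ is convergent and total by assumption in the first case and the constant sequence $0$ is trivially convergent and total in the second.

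Finally I would observe that the whole assertion of the lemma is a single $\mathfrak{L}_{PA}$-sentence whose truth in the standard model $\N$ is immediate from the propositional shape of $\phi'$ above. Since $Th(\N)$ is complete — it is the full first-order theory of $\N$ — truth in $\N$ coincides with being a theorem of $Th(\N)$, and a theorem of $Th(\N)$ holds in every $M\models Th(\N)$, as required. The only delicate point is the expressibility bookkeeping of the first step: faithfully arithmetizing the fraction-field operations on $-M\cup M$ (the sign convention, the pairing, and the sign-sensitive cross-products $a_1b_2-a_2b_1$ and $a_2b_2$) inside $\mathrm{Good}$. But this is precisely the arithmetization already implicit in the definitions above, so no genuinely new ingredient is needed.
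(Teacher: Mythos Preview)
Your proof is correct and follows essentially the same route as the paper: your $\mathrm{Good}(\vec v)$ is the paper's $conv(\phi,\vec v)$, your $Z(x,y)$ is the paper's $\pi_1(y)=0\wedge\pi_2(y)=1$, and the resulting disjunction is identical. Your explicit remark that truth in $\mathbb{N}$ suffices because $Th(\mathbb{N})$ is complete is a nice clarification the paper leaves implicit.
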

\begin{proof}
 We abbreviate by $tot(\phi,\vec{v})$ the $\mathfrak{L}_{PA}$-formula expressing that $\phi(\vec{v},x,y)$ defines a total function such that $\phi(\vec{v},a,b)\wedge\pi_{2}(b)=c$ implies that $b\neq 0$. (Here $\pi_{2}$ is the function for obtaining
the second element of a coded pair.) Then, $conv(\phi,\vec{v})$ expresses that $tot(\phi,\vec{v})$ and that $((a,b)|x\in M\wedge\phi(\vec{v},a,b))$ defines a convergent sequence.
Now let $\phi^{\prime}(\vec{v},x,y)$ be \begin{center}$(conv(\phi,\vec{v})\wedge\phi(\vec{v},x,y))\vee(\neg conv(\phi,\vec{v})\wedge \pi_{1}(y)=0\wedge\pi_{2}(y)=1)$\end{center}. This is obviously as desired.
\end{proof}

\begin{corollary}
 If $x$ is an $M$-real, then there exist a safe formula $\phi$ and a finite sequence $\vec{v}\subseteq M$ such that $\phi(\vec{v},i,j)$ defines $x$.
\end{corollary}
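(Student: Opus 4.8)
The plan is to obtain the required safe formula by applying the preceding lemma to a defining formula of $x$, and then to check that the ``patched'' formula it produces is in fact safe while still defining $x$ over $M$.

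I would start by unwinding the hypothesis. Since $x$ is an $M$-real, it is in particular a convergent pre-real, so there are an $\mathfrak{L}_{PA}$-formula $\psi(x,y,z)$ and a finite sequence $\vec{v}\subseteq M$ with $M\models conv(\psi,\vec{v})$ in the notation of the lemma's proof; that is, $\psi(\cdot,\cdot,\cdot,\vec{v})$ defines over $M$ a total, convergent function realising $x$. Feeding $\psi$ into the lemma yields a formula $\psi'$, and I would simply set $\phi:=\psi'$.

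The core of the argument is then to show $\phi$ is safe, i.e. that $Th(\mathbb{N})$ proves $\phi$ defines a convergent total function for every parameter. This follows from the conclusion of the lemma, which is itself a theorem of $Th(\mathbb{N})$: for an arbitrary parameter, either the original $\psi$ already defines a convergent total function, in which case $\psi'$ defines the same one, or it does not, in which case $\psi'$ defines the constant $0$ function. In both cases $\psi'$ defines a convergent total function, so $\phi$ is safe. To finish, I would note that, since $M\models Th(\mathbb{N})$, the lemma applies inside $M$ with the specific parameter $\vec{v}$; and as $\psi(\cdot,\cdot,\cdot,\vec{v})$ does define a convergent total function over $M$, the lemma guarantees $\psi'(\cdot,\cdot,\cdot,\vec{v})$ defines the very same function, namely $x$. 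Hence $\phi$ and $\vec{v}$ witness the claim.

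The only point requiring a moment's care — and the place where the argument could in principle go wrong — is the verification that the constant $0$ function qualifies as a convergent total function, since otherwise safety would fail on exactly those parameters where $\psi$ misbehaves. This is immediate, though: the constant function with value $0=(0,1)$ is total, has nonzero second coordinate, and a constant sequence trivially meets the convergence condition of the relevant definition. Everything else is a direct transcription of the lemma, so I expect no further obstacles.
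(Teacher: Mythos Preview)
Your argument is correct and follows exactly the approach the paper takes: apply the preceding lemma to a defining formula of $x$ and take the resulting $\psi'$ as the safe formula. The paper's proof simply says ``Immediate from the last lemma. (Take the corresponding safe formula.)'', of which your write-up is a careful unpacking.
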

\begin{proof}
 Immediate from the last lemma. (Take the corresponding safe formula.)
\end{proof}

\begin{prop}
There is an $\mathfrak{L}_{PA}[X,Y]$-formula $\phi_{<}(X,Y)$ such that, for all $x,y\in K_{M}$, $\phi(X\mapsto x,Y\mapsto y)$ holds iff $x<y$.
\end{prop}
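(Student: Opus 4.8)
The plan is to obtain $\phi_{<}$ by directly translating the definition of $<$ (the third definition above) into the two-sorted language $\mathfrak{L}_{PA}[X,Y]$, reading $X$ and $Y$ as ternary relation symbols intended to be interpreted as the graphs of \emph{safe} representatives of $x$ and $y$. Concretely, by the Corollary every class in $K_{M}$ has a representative given by a safe formula $\phi_x(\vec{v},i,j)$, so the substitution $X\mapsto x$ means interpreting the atomic formula $X(l,a,b)$ in $M$ by "$f_x(l)=(a,b)$"; under this reading $\phi_{<}(X\mapsto x,Y\mapsto y)$ becomes a genuine $\mathfrak{L}_{PA}$-formula with parameters, evaluable in $M$. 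If $X(l,a_1,b_1)$ and $Y(l,a_2,b_2)$ then $x_l=a_1/b_1$ and $y_l=a_2/b_2$, where by our sign convention $a_1,a_2$ code integers of $M\cup -M$ and $b_1,b_2$ code positive elements of $M$. The inequality $m x_l + 1 < m y_l$ of the definition is then, after clearing the positive denominators $b_1 b_2$, equivalent to the integer inequality $m a_1 b_2 + b_1 b_2 < m a_2 b_1$.

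To render this inside $\mathfrak{L}_{PA}$ (whose intended universe is $M$, not $M\cup -M$), I would fix, once and for all, auxiliary $\mathfrak{L}_{PA}$-formulas $\mathrm{Add}$, $\mathrm{Mult}$ and $\mathrm{Less}$ implementing signed-integer addition, multiplication and comparison on the sign-codes, by a finite case split on the sign bits of the arguments; their correctness is a theorem of $Th(\mathbb{N})$. Writing $\Theta(m,a_1,b_1,a_2,b_2)$ for the $\mathfrak{L}_{PA}$-formula built from these that expresses $m a_1 b_2 + b_1 b_2 < m a_2 b_1$, I set $\phi_{<}(X,Y)$ to be
\[
\exists m\,\exists k\,\forall l\,\forall a_1\,\forall b_1\,\forall a_2\,\forall b_2\,\big(l>k \wedge X(l,a_1,b_1)\wedge Y(l,a_2,b_2) \to \Theta(m,a_1,b_1,a_2,b_2)\big).
\]
Since the definition of $<$ quantifies only over $m,k,l$ and the sequence terms (and never over the particular data of $x,y$), this $\phi_{<}$ is a single fixed formula with no parameters, and unwinding the abbreviations shows at once that $\phi_{<}(X\mapsto x,Y\mapsto y)$ holds in $M$ for a pair of representatives exactly when $m x_l + 1 < m y_l$ holds eventually, i.e. exactly when $x<y$ for those representatives.

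The step that genuinely requires an argument — and the one I expect to be the main obstacle — is well-definedness on $K_{M}$: because the elements of $K_{M}$ are $\sim$-classes, I must check that the truth value of $\phi_{<}$ is independent of which safe representatives are substituted. For this I would show that $x\sim x'$ and $y\sim y'$ imply $x<y \iff x'<y'$, by a triangle-inequality estimate. A witness $m$ for $x<y$ (so $m x_l + 1 < m y_l$ eventually) is converted into a witness $2m$ for $x'<y'$: since $\sim$ forces $x_l-x'_l$ and $y_l-y'_l$ to become arbitrarily small, the slack "$+1$" survives up to halving, giving $2m\,x'_l + 1 < 2m\,y'_l$ eventually. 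Once this invariance is established, $\phi_{<}$ descends to a well-defined relation on $K_{M}$ agreeing with $<$, which completes the proof; the only remaining care is to ensure that $\mathrm{Add},\mathrm{Mult},\mathrm{Less}$ are chosen uniformly so that $\phi_{<}$ is genuinely one formula independent of $x,y$.
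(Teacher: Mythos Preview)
Your proposal is correct and follows exactly the route the paper has in mind: the paper's entire proof is the single line ``Immediate from the definition,'' and what you have written is precisely the unpacking of that remark---transcribing Definition~3 into an $\mathfrak{L}_{PA}[X,Y]$-formula, with the bookkeeping for the sign-encoding convention and the (tacitly assumed) check that the resulting relation is invariant under $\sim$. Your extra care about well-definedness on equivalence classes is warranted and not made explicit in the paper, but it is the same argument, just spelled out.
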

\begin{proof}
 Immediate from the definition.
\end{proof}

\begin{defini}
 Let $x=(x_{i})_{i\in M}$ and $y=(y_{i})_{i\in M}$ be $M$-reals with $x_{i}=\frac{a_{i}}{b_{i}}$ and $y_{i}=\frac{c_{i}}{d_{i}}$. We define $x+_{M}y$ by $(x_{i}+y_{i})_{i\in M}$, where
$x_{i}+y_{i}=\frac{a_{i}d_{i}+b_{i}c_{i}}{b_{i}d_{i}}$. Furthermore, we define $x\cdot_{M}y$ by $(x_{i}y_{i})_{i\in M}$, where $x_{i}\cdot y_{i}=\frac{a_{i}c_{i}}{b_{i}d_{i}}$. The subscript
$M$ is dropped whenever there is no danger of confusion.
\end{defini}

\begin{prop}
 $K_{M}$ is closed under $+$ and $\cdot$.
\end{prop}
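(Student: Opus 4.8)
The plan is to reduce both the definability and the convergence requirements to the completeness of $Th(\mathbb{N})$, since $M\models Th(\mathbb{N})$ inherits every true first-order statement about $\mathbb{N}$. Let $x,y\in K_M$ be represented by $M$-reals given by formulas $\psi_1(\vec{v}_1,i,a,b)$ and $\psi_2(\vec{v}_2,i,c,d)$ with parameters $\vec{v}_1,\vec{v}_2\in M$; by the Corollary I may take these to be safe. First I would produce defining formulas for the sum and product sequences directly from the fraction rules in the Definition above: let $\phi_{+}(\vec{v}_1,\vec{v}_2,i,p,q)$ assert
\[
\exists a\,\exists b\,\exists c\,\exists d\,\bigl(\psi_1(\vec{v}_1,i,a,b)\wedge\psi_2(\vec{v}_2,i,c,d)\wedge p=ad+bc\wedge q=bd\bigr),
\]
and let $\phi_{\times}$ be the analogous formula with $p=ac$ in place of $p=ad+bc$. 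These are $\mathfrak{L}_{PA}$-formulas, so $x+_My$ and $x\cdot_My$ have definable graphs and are pre-reals.

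The heart of the argument is convergence, and here the point is that it need not be checked by hand inside $M$. Consider the $\mathfrak{L}_{PA}$-sentence
\[
\sigma_{+}:\ \forall \vec{v}_1\,\forall \vec{v}_2\,\bigl(conv(\psi_1,\vec{v}_1)\wedge conv(\psi_2,\vec{v}_2)\ \rightarrow\ conv(\phi_{+},\vec{v}_1,\vec{v}_2)\bigr),
\]
with $conv$ as in the proof of the Lemma, and the corresponding sentence $\sigma_{\times}$ for the product. Each of these is a true statement about $\mathbb{N}$: it is precisely the arithmetization of the classical facts that the sum, respectively product, of two convergent sequences of rationals is again convergent. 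Hence $\sigma_{+},\sigma_{\times}\in Th(\mathbb{N})$, so $M\models\sigma_{+}$ and $M\models\sigma_{\times}$. Instantiating at the actual parameters $\vec{v}_1,\vec{v}_2$ and using that $x,y$ are convergent (so that $M\models conv(\psi_1,\vec{v}_1)\wedge conv(\psi_2,\vec{v}_2)$), I conclude $M\models conv(\phi_{+},\vec{v}_1,\vec{v}_2)$ and likewise for $\phi_{\times}$; that is, $x+_My$ and $x\cdot_My$ are $M$-reals.

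Finally, to see that these induce genuine operations on $K_M$ I would run the same transfer for $\sim$. The sentence asserting that, whenever $\psi_1\sim\psi_1'$ and $\psi_2\sim\psi_2'$ as convergent sequences, the associated sum (resp. product) formulas are again $\sim$-equivalent, is once more true in $\mathbb{N}$ and hence holds in $M$; this makes $[x]+[y]:=[x+_My]$ and $[x]\cdot[y]:=[x\cdot_My]$ independent of the chosen representatives, and by the previous paragraph these classes lie in $K_M$. Thus $K_M$ is closed under $+$ and $\cdot$.

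The step I expect to require the most care is the verification that $\sigma_{\times}$ is genuinely true in $\mathbb{N}$. The only mildly delicate point is that the classical proof of convergence of a product passes through boundedness of Cauchy sequences, and one must make sure the arithmetized statement quantifies the bound correctly; in $M$ this bound may be nonstandard, which is harmless precisely because $\sigma_{\times}$ is a single closed formula whose truth in $\mathbb{N}$ already transfers. Everything else, namely the definability of $\phi_{+},\phi_{\times}$ and the instantiation at $\vec{v}_1,\vec{v}_2$, is routine, and I would not expand the $\varepsilon/2$-type estimates, relying instead on completeness of $Th(\mathbb{N})$ to move them into $M$.
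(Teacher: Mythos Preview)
Your argument is correct and is precisely the expansion of what the paper has in mind: the paper's own proof is the single word ``Trivial,'' relying implicitly on the same two observations you spell out, namely that the componentwise sum and product are defined by $\mathfrak{L}_{PA}$-formulas, and that convergence (and compatibility with $\sim$) transfers from $\mathbb{N}$ to $M$ because $M\models Th(\mathbb{N})$. So your approach matches the paper's, only with the details made explicit.
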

\begin{proof}
 Trivial.
\end{proof}

\begin{lemma} $(K_{M},+,\cdot,<)$ is an ordered field.\end{lemma}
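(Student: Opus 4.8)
The plan is to verify the ordered-field axioms internally in $M$, exploiting that every defining condition---convergence, nullity, equivalence, the order, and the field operations---is given by an $\mathfrak{L}_{PA}$-formula and that, since $M\models Th(\mathbb{N})$, all true first-order facts about rational arithmetic hold inside $M$. Throughout I regard an $M$-real as a definable Cauchy sequence $(x_i)_{i\in M}$ over the fraction field $\text{ff}(M)$ of $-M\cup M$, which is itself an ordered field.

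First I would dispatch the commutative-ring axioms together with the well-definedness of $+$ and $\cdot$ on $\sim$-classes. As both operations are defined termwise by the rational operations, associativity, commutativity, distributivity, and the neutrality of $0_K$ and $1_K$ hold termwise in $\text{ff}(M)$ and therefore descend to $\sim$-classes; the additive inverse of $[x]_{\sim}$ is the class of $(-x_i)_{i\in M}$. For well-definedness it suffices to check that $\sim$ is a congruence, i.e.\ that $x\sim x'$ and $y\sim y'$ imply $x+y\sim x'+y'$ and $xy\sim x'y'$. This is the familiar fact that the null sequences form an ideal in the ring of Cauchy sequences, which here is a single $\mathfrak{L}_{PA}$-statement (with parameters) that $M$ verifies. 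Closure of $K_M$ under $+$ and $\cdot$ is already granted by the preceding proposition.

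The crux is the existence of multiplicative inverses. The key lemma, provable inside $M$ by the standard Cauchy estimate, is that a convergent $M$-real $x$ with $x\not\sim 0$ is eventually bounded away from zero: there are $m_0,n_0\in M$ with $|x_k|>1/m_0$ for all $k>n_0$. (One negates the nullity formula to obtain a fixed $m$ with $|x_k|\geq 1/m$ for infinitely many $k$, then applies the convergence condition at modulus $2m$ to propagate this to a whole tail.) Granting this, I define $x^{-1}$ termwise by $y_i=b_i/a_i$, with $y_i=0$ when $a_i=0$; this has definable graph, and for $k>n_0$ it agrees with $1/x_k$. The tail bound $|x_{k_1}x_{k_2}|>1/m_0^{2}$ converts the Cauchy condition for $x$ into one for $y$ via $|y_{k_1}-y_{k_2}|<m_0^{2}\,|x_{k_1}-x_{k_2}|$, so $y$ is an $M$-real, and since $x_k y_k=1$ for all $k>n_0$ we get $x\cdot y\sim 1$.

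Finally, the order axioms. Transitivity of $<$ and its compatibility with $+$ are routine from the definition. Totality and trichotomy again rest on the bounded-away-from-zero lemma: if $x-y\not\sim 0$, the difference is a Cauchy sequence bounded below in absolute value, so its sign is eventually constant and exactly one of $x<y$, $y<x$ holds; if $x-y\sim 0$ then $x=y$ in $K_M$. Compatibility with multiplication---$0<x$ and $0<y$ imply $0<xy$---follows because each factor is eventually bounded below by a positive rational, hence so is the product. I expect the inverse step to be the main obstacle, precisely the bounded-away-from-zero lemma and the definability of the reciprocal sequence; the remaining axioms are a termwise transfer of ordered-field identities from $\text{ff}(M)$ modulo null sequences.
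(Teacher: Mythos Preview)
Your argument is correct and is the classical Cauchy-completion verification: show that null sequences form an ideal, prove the bounded-away-from-zero lemma to get inverses, and read off trichotomy and compatibility of $<$ with $+$ and $\cdot$ from eventual sign-constancy. Each of these steps is a first-order fact about rational sequences, hence transfers from $\mathbb{N}$ to $M$, and you handle definability of the reciprocal sequence correctly.

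The paper takes a genuinely different route. It declares the additive and multiplicative group structures ``clear from the definition'' (in particular it does \emph{not} write out the inverse argument you isolate as the crux), and instead spends its effort on two algebraic facts: (i) every positive element of $K_M$ is a square, proved by an explicit approximate-square-root construction, and (ii) $-1$ is not a sum of squares, proved by a transfer argument (if it were, the witnessing formulas and parameters would give the same statement in $\mathbb{N}$, hence in $\mathbb{R}$). From (i) and (ii) the compatibility of $<$ with multiplication is recovered via the positive cone. What the paper's approach buys is that (i) is reused verbatim later to show $K_M$ is closed under square roots, and (ii) is the first instance of the ``express it arithmetically and reflect to $\mathbb{N}$'' trick that drives the rest of the paper. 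What your approach buys is a self-contained, more honest proof of the field axioms---in particular you actually establish the existence of multiplicative inverses rather than asserting it---without invoking $\mathbb{R}$ at all.
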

\begin{proof}
It is clear from the definition that $(K_{M},+,<)$ and\\ $(K^{M}-[0]_{\sim},\cdot,<)$ are ordered abelian groups. The distributivity of $\cdot$ over $+$ is also immediate.\\
We proceed by showing that, for all $x\in K_{M}$, we have $x>0$ iff there exists $y$ such that $x=y^2$.\\
To see this, let $x\in K_{M}^{>0}$ be arbitrary, say $x=(\frac{p_{i}}{q_{i}})_{i\in M}$. As $x>0$ and $x$ is convergent, there must exist some $m\in M$ such that $p_{i}>0$ for $i>m$.
As $M\models Th(\mathbb{N})$, it holds in $M$ that, for every $k\in M^{>0}$, there exists $k^{\prime}$ such that $k^{\prime2}\leq k<(k^{\prime}+1)^2$.
Let $x^{\prime}:=(\frac{p_{i+m}^{\prime}}{q_{i+m}^{\prime}})_{i\in M}$. Then $M\models |\frac{p_{i+m}}{q_{i+m}}-(\frac{p_{i}^{\prime}}{q_{i}^{\prime}})^2|<\frac{3}{q_{i}}$. Since $x$ is convergent,
$(\frac{3}{q_{i}})_{i\in M}$ is also convergent, hence $x^{\prime2}\sim x$. So $x^{\prime}$ is as desired.\\
In order to see that $K_{M}$ is an ordered field, we finally show that $-1$ is not a sum of squares. Otherwise, let $-1=x_{1}^2+...+x_{n}^2$ with $x_{1},...,x_{n}\in K_{M}$ By definition of $K_{M}$, there are formulas $\phi_{1},...,\phi_{n}$ and parameters
$\vec{v}_{1},...,\vec{v}_{n}$ such that $\phi_{i}(\vec{v}_{i},x)$ codes the $M$-real $x_{i}$. 
Hence $M\models\exists\vec{v}_{1},...,\vec{v}_{n}(x_{1}^{2}+...+x_{n}^{2}=-1)$ (the term in the brackets appropriately
expressed). By elementary equivalence, $\mathbb{N}$ is a model of the same statement. Hence $-1$ is a sum of squares in the reals, a contradiction.\end{proof}

\begin{thm}\label{recclosed}
Let $X_1,...,X_n \in K_{\mathbb{N}}$, and let $Y:\mathbb{N}\rightarrow\mathbb{Z}\times\mathbb{N}^{>0}$.\\
(1) If $Y$ is recursive in $X_1,...,X_n$ and convergent, then $Y\in K_{\mathbb{N}}$.\\
(2) $K_{\mathbb{N}}$ is closed under the Turing-Jump, i.e. for $Y\in K_{\mathbb{N}}$, $n\in\mathbb{N}$, we have $Y^{(n)}\in K_{\mathbb{N}}$.
\end{thm}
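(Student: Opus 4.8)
The plan is to reduce both claims to pure recursion theory, using the basic fact that over the \emph{standard} model a relation is definable in $\mathfrak{L}_{PA}$ precisely when it is arithmetic, i.e. lies in $\bigcup_{k}\Delta^{0}_{k}$. Together with the preceding Corollary this means that $K_{\mathbb{N}}$ is exactly the set of $\sim$-classes of convergent sequences $Y\colon\mathbb{N}\to\mathbb{Z}\times\mathbb{N}^{>0}$ admitting a representative whose graph (coded as a subset of $\mathbb{N}$ by a fixed recursive pairing) is arithmetic; moreover every such real is literally presented by an arithmetic graph, namely the one defined by its safe formula. I would first record this reformulation, and the two classical ingredients I intend to use: Post's theorem, that $A\subseteq\mathbb{N}$ is arithmetic iff $A\leq_{T}\emptyset^{(k)}$ for some $k\in\mathbb{N}$; and monotonicity of the jump, $A\leq_{T}B\Rightarrow A'\leq_{T}B'$, combined with $(\emptyset^{(k)})'=\emptyset^{(k+1)}$. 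I also fix once and for all a recursive, convergence-preserving encoding of subsets of $\mathbb{N}$ as reals, e.g. $A\mapsto r_{A}$, where $r_{A}$ is represented by the partial sums of $\sum_{i\in A}2^{-i-1}$, so that the jump of a real again makes sense as an element that can be tested for membership in $K_{\mathbb{N}}$.

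For part (1), I would argue as follows. By the Corollary each $X_{i}$ is presented by an arithmetic graph $G_{i}$, and I read ``$Y$ recursive in $X_{1},\dots,X_{n}$'' as recursive in these $G_{i}$. Post's theorem gives $k_{i}$ with $G_{i}\leq_{T}\emptyset^{(k_{i})}$; put $k:=\max_{i}k_{i}$, so that $G_{i}\leq_{T}\emptyset^{(k)}$ for every $i$. Then the graph of $Y$ satisfies $Y\leq_{T}G_{1}\oplus\dots\oplus G_{n}\leq_{T}\emptyset^{(k)}$, hence is $\Delta^{0}_{k+1}$ and in particular arithmetic. As $Y$ is convergent by hypothesis, $Y$ is a convergent pre-real over $\mathbb{N}$, so $[Y]_{\sim}\in K_{\mathbb{N}}$.

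For part (2), I would induct on $n$, the case $n=0$ being trivial. Given $Y\in K_{\mathbb{N}}$ with arithmetic representing graph $G\leq_{T}\emptyset^{(k)}$, monotonicity of the jump gives $G'\leq_{T}(\emptyset^{(k)})'=\emptyset^{(k+1)}$, so $G'$ is again arithmetic. Under the fixed encoding the real $Y'$ coding $G'$ has arithmetic graph and is convergent, being produced by the partial-sum encoding; hence $Y'\in K_{\mathbb{N}}$ exactly as in part (1). Iterating, $Y^{(n)}$ is coded by $G^{(n)}\leq_{T}\emptyset^{(k+n)}$, which is arithmetic, and the encoded real is convergent, so $Y^{(n)}\in K_{\mathbb{N}}$.

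The recursion theory here is entirely standard; the real work is in the bookkeeping that lets these statements apply to \emph{reals} rather than to bare subsets of $\mathbb{N}$. The two points I expect to have to nail down are: (a) that definability over the standard model coincides with the arithmetic hierarchy, so that $K_{\mathbb{N}}$ genuinely is the class of arithmetic convergent reals; and (b) the choice of representative implicit in phrases like ``recursive in $X_{1},\dots,X_{n}$'' and ``the $n$-th jump of $Y$''. Point (b) is the subtler one, since a given real has representatives of many different Turing degrees (already $0$ has non-arithmetic representatives), so the statement is only meaningful once one agrees to compute relative to the \emph{defining} (arithmetic) graph of each real and to regard the jump through the fixed set-to-real encoding. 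Once these conventions are pinned down the arguments above are forced, and it only remains to check that the encoding preserves convergence and does not affect the membership conclusion — which it does not, as the conclusion merely asserts that $Y^{(n)}$ lies in $K_{\mathbb{N}}$.
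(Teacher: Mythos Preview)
Your argument is correct. Both parts ultimately rest on the same fact---that the arithmetic sets are closed under relative recursion and under the jump---but you reach it by a different route from the paper. The paper proves (1) \emph{syntactically}: the relation ``$P^{\oplus_i X_i}(k)\downarrow j$'' is expressed by a fixed $\mathfrak{L}_{PA}$-formula with extra predicate symbols $X_1,\dots,X_n$, and one then literally substitutes the arithmetic defining formulas of the $X_i$ to obtain an $\mathfrak{L}_{PA}$-formula defining $Y$. For (2) the paper simply cites the arithmetical definability of the jump. Your route is \emph{degree-theoretic}: you invoke Post's theorem to identify ``arithmetic'' with ``$\leq_T \emptyset^{(k)}$ for some $k$'', and then use transitivity of $\leq_T$ and monotonicity of the jump. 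The paper's approach is slightly more elementary (no Post) and yields an explicit defining formula, which matters later when the paper transfers statements from $\mathbb{N}$ to nonstandard $M$; your approach is cleaner in that it treats (1) and (2) uniformly and makes explicit the representative-dependence issue that the paper leaves implicit.
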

\begin{proof}
(1) Let $P$ be a Turing programm such that $P^{\oplus_{i=1}^{n}X_{i}}(k)=y_{k}$ (the $k$-th bit of $y$) for all $k\in\mathbb{N}$. Let $\phi_{P}(v_1,v_2,X_1,...,X_n)$ be a formula of $\mathfrak{L}_{PA}$ amended
with $n$ extra predicates such that, for all $i,j\in\mathbb{N}$, $Z_1,...,Z_n\in\mathbb{R}$, $\phi_{P}(i,j,Z_1,...,Z_n)$ holds in $\mathbb{N}$ iff $P^{\oplus_{i=1}^{n}Z_{i}}(i)\downarrow j$. Now consider $\tilde{\phi}_{P}(x,y)$ obtained
by eliminating the $X_i$ using their definition in $K_{M}$. (I.e. $X_{1}(t)$ would be replaced by $\exists{\tilde{t}}\phi_{1}(\tilde{t})$, where $\phi_{1}$ defines $X_1$.) Then $\tilde{\phi}_{P}$ is an $\mathfrak{L}_{PA}$-formula defining
$Y$. Hence $Y\in K_{\mathbb{N}}$.\\
(2) By arithmetical definability of the Turing jump. 
\end{proof}

\begin{prop} $M_{K}:=\{n_{K}|n\in M\}\subseteq K_{M}$.\end{prop}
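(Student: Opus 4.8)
The plan is to exhibit, for each $n \in M$, a convergent pre-real whose $\sim$-class is $n_K$; by the definition of $K_M$ this immediately gives $n_K \in K_M$, and since $n$ is arbitrary, $M_K \subseteq K_M$. The obvious candidate is the constant function itself: representing the element $n$ by the fraction $n/1$, set $c_n \colon M \to (M \cup -M) \times (M - \{0\})$ with $c_n(x) = (n,1)$ for all $x \in M$.

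First I would check that $c_n$ is a pre-real in the sense of the first definition. Its graph is defined by the $\mathfrak{L}_{PA}$-formula $\psi(x,y,z) := (y = n \wedge z = 1)$, with $n$ itself serving as the parameter sequence $\vec{v}$; this clearly describes a total function from $M$ whose second coordinate is the nonzero element $1$, so all the requirements on a pre-real are satisfied.

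The only clause that calls for any verification is convergence. Plugging $\psi$ into the convergence condition, any two indices $k_1, k_2$ force $a_1 = a_2 = n$ and $b_1 = b_2 = 1$, so that the numerator difference $a_1 b_2 - a_2 b_1 = n - n = 0$ vanishes identically. Hence the left-hand side of the convergence inequality is $0$ for every $m$, while its right-hand side is positive (the product of the denominators being $1$), and the existential witness can be taken to be $0$ uniformly. Intuitively, a constant sequence is trivially Cauchy because no two of its terms ever differ. Thus $c_n$ is an $M$-real and $n_K = [c_n]_\sim \in K_M$.

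I expect essentially no obstacle here: the entire content is the observation that the constant function has a definable graph and that the vanishing of all term-differences makes the convergence clause hold trivially. The one point meriting a second's care is the degenerate value $n = 0$, where it is worth recording that $c_0$ is precisely the zero pre-real of the earlier definition --- the condition $ma < b$ becomes $0 < 1$ --- and is convergent for the same reason, so that $0_K \in K_M$ as well.
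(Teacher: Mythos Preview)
Your proposal is correct and follows exactly the same idea as the paper's proof, which simply reads ``Immediate, as constant functions are obviously definable over $M$.'' You have merely unpacked this one-line observation by writing down the defining formula $\psi(x,y,z)\equiv(y=n\wedge z=1)$ and verifying the convergence clause explicitly; there is no difference in approach.
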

\begin{proof} Immediate, as constant functions are obviously definable over $M$.\end{proof}

\begin{prop} $(M_{K},0_{K},1_{K},+_{K},\cdot_{K},<_{K})\equiv_{el}(M,0,1,+,\cdot,<)$.\end{prop}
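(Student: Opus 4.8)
The plan is to prove the stronger statement that the map $\iota\colon M\to M_{K}$, $n\mapsto n_{K}$, is an isomorphism of structures in the signature $(0,1,+,\cdot,<)$; elementary equivalence is then immediate, since isomorphic structures satisfy the same first-order sentences. Surjectivity of $\iota$ onto $M_{K}$ is just the definition $M_{K}=\{n_{K}\mid n\in M\}$, and each constant function is trivially a convergent pre-real, hence a genuine element of $K_{M}$. The constants are matched by definition: $\iota(0)=0_{K}$ and $\iota(1)=1_{K}$.

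For the algebraic operations I would evaluate the defining clauses for $+_{M}$ and $\cdot_{M}$ on the constant representatives $a_{i}=n,\,b_{i}=1$ and $c_{i}=m,\,d_{i}=1$. From the definition, $x_{i}+y_{i}=(a_{i}d_{i}+b_{i}c_{i})/(b_{i}d_{i})=(n+m)/1$ and $x_{i}\cdot y_{i}=(a_{i}c_{i})/(b_{i}d_{i})=nm/1$, so $\iota(n)+_{K}\iota(m)$ and $\iota(n)\cdot_{K}\iota(m)$ are represented by the constant sequences at $n+m$ and $nm$; that is, $\iota(n)+_{K}\iota(m)=\iota(n+m)$ and $\iota(n)\cdot_{K}\iota(m)=\iota(nm)$. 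Since $+_{K}$ and $\cdot_{K}$ are already known to be well defined on $\sim$-classes (the propositions establishing that $K_{M}$ is an ordered field), no further check on the choice of representative is required.

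For the order, and simultaneously for injectivity, I would unwind the definition of $<$ on the two constant sequences. We have $\iota(n)<_{K}\iota(m)$ iff there exist $s,k\in M$ with $s n+1<s m$ for all $l>k$; as the terms are constant this reduces to the existence of some $s\in M$ with $s n+1<s m$. This comparison is tested directly in the arithmetic of $M$ and needs no subtraction: if $n<m$ then $n+1\le m$, so $s=2$ gives $2n+1<2n+2\le 2m$; and if $m\le n$ then $s n+1>s m$ for every $s$. Hence $n<m\iff\iota(n)<_{K}\iota(m)$, so $\iota$ preserves and reflects the order. Injectivity follows at once, since distinct $n,m$ are comparable in $M$, whence their images are strictly $<_{K}$-comparable and thus distinct, $<_{K}$ being a strict order on the field $K_{M}$ by the preceding lemma. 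Combining the three paragraphs, $\iota$ is an isomorphism, so the structures are elementarily equivalent.

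I do not expect a genuine obstacle here; the computation is routine. The only two points needing attention are the sign convention flagged in the Remark, so that elements and their negatives are coded consistently in the image of $\iota$, and the observation just made that the order test can be carried out inside $M$ without invoking subtraction, which is what keeps the whole verification internal to the arithmetic of $M$.
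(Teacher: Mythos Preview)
Your argument is correct and is exactly the obvious isomorphism that the paper has in mind: the paper's own proof is the single word ``Obvious'', and the canonical way to see the statement is precisely to check that $n\mapsto n_{K}$ is an isomorphism of $(0,1,+,\cdot,<)$-structures, which you do carefully. There is nothing to add; your verification of the order via $s=2$ and the derivation of injectivity from strict comparability are fine.
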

\begin{proof}: Obvious.\end{proof}

\begin{lemma} $M_{K}$ is an integer part of $K_{M}$.\end{lemma}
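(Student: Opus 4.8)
The plan is to verify the two defining conditions for an integer part: that $M_K$ is a discretely ordered subring of $K_M$, and that it interpolates, i.e. every element of $K_M$ lies in a half-open unit interval $[n_K,(n+1)_K)$ with $n\in M$. Here I read $M_K$ as the discretely ordered ring $\{n_K\}$ together with its additive inverses, as is forced by the requirement that an integer part be closed under subtraction; the negative constants $(-n)_K$ are available in $K_M$ by the coding convention for negative numerators.

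First I would dispose of the algebraic part. Closure of $M_K$ under $+$ and $\cdot$ is the earlier Proposition, and constant sequences are closed under negation, so $M_K$ is a subring of the field $K_M$. For discreteness it suffices to show that no element of $M_K$ lies strictly between $0_K$ and $1_K$ in the field order $<$ of $K_M$. Since the embedding $n\mapsto n_K$ is order-preserving and $(M_K,0_K,1_K,+_K,\cdot_K,<_K)\equiv_{el}(M,0,1,+,\cdot,<)$, and since $M\models Th(\mathbb{N})$ satisfies $\forall x\,(0<x\to 1\le x)$, the same sentence holds in $M_K$; hence $1_K$ is the least positive element, and by translation invariance the order on the whole ring $M_K$ is discrete.

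The substantive step is the interpolation property. Fix $x\in K_M$ and a safe formula $\phi(\vec v,i,j)$ defining it, writing $x_i=p_i/q_i$ with $q_i>0$. First I would record that $x$ is bounded: applying the convergence clause with $m=1$ shows that all sufficiently late terms $x_k$ lie within bounded distance of a fixed rational term $x_{n+1}=p_{n+1}/q_{n+1}$, whence with $B:=|p_{n+1}|+1\in M$ one gets $-B_K<x<B_K$ in $K_M$. Next, using $\phi_<$ from the earlier Proposition together with the parameters $\vec v$, the relation ``$n_K\le x$'' is expressed by an $\mathfrak{L}_{PA}$-formula in the single free variable $n$ (ranging over the integers of $M$). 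Thus $S:=\{n:n_K\le x\}$ is $M$-definable; it is nonempty (it contains $-B$) and bounded above (by $B$). Because $M\models Th(\mathbb{N})$ proves the greatest-element principle for arbitrary $\mathfrak{L}_{PA}$-formulas, $S$ has a maximum $n_0\in M$. By trichotomy in the ordered field $K_M$, maximality of $n_0$ forces $(n_0+1)_K\not\le x$, i.e. $x<(n_0+1)_K$, while $n_0\in S$ gives $(n_0)_K\le x$. Hence $(n_0)_K\le x<(n_0+1)_K$, with uniqueness of $n_0$ immediate from discreteness, so $M_K$ is an integer part of $K_M$.

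The main obstacle I anticipate is bookkeeping rather than conceptual: one must check that ``$n_K\le x$'' really is captured by an $\mathfrak{L}_{PA}$-formula in $n$ with parameters $\vec v$ — unwinding $\phi_<$ and the definition of $<$ through the representing sequences — and that the greatest-element principle is applied legitimately to this (non-$\Delta_0$) formula, which is exactly where the strength of true arithmetic (full induction, hence the least/greatest-number principle for \emph{all} formulas) is used. A secondary point is the edge case where $x$ is equivalent to an integer constant; the greatest-element argument handles it uniformly, since if $x\sim m_K$ then $m\in S$ and $m+1\notin S$, giving $n_0=m$ and avoiding any separate stabilization-of-floors analysis.
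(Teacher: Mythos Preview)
Your argument is correct, but it differs from the paper's. The paper works directly with the representing sequence: given $r=(p_i/q_i)_{i\in M}$, it forms the $M$-definable set $S$ of values $\lfloor p_k/q_k\rfloor$ that occur for cofinally many $k$, takes its \emph{least} element $s$ (again using full induction), and then does a small case analysis on whether the floor sequence eventually stabilises at $s$ or alternates cofinally between $s$ and $s+1$; in the latter case convergence forces $r\sim(s+1)_K$. Your route instead defines $S=\{n:n_K\le x\}$ via the field order and takes its \emph{greatest} element, which sidesteps the stabilisation-of-floors analysis entirely. Both arguments spend the strength of $Th(\mathbb{N})$ in the same place (an unrestricted least/greatest-number principle applied to a non-$\Delta_0$ formula), but yours is more streamlined and makes the role of $\phi_<$ explicit; the paper's version stays closer to the sequence representation and never needs the preliminary boundedness step or the comparison formula $\phi_<$. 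One minor point: your bound $B=|p_{n+1}|+1$ could be sharpened or padded to avoid a non-strict inequality at the edge, but this is cosmetic.
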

\begin{proof}: For $(a,b)\in M\times M-\{0\}$, define $\lfloor\frac{a}{b}\rfloor$ to be the unique $k\in M$ such that $kb\leq a<(k+1)b$. If $\psi$ defines a real $r$ over $M$, then 
$\phi(x)\equiv \forall{n}\exists{k>n}\exists{a,b}(\psi(k,a,b)\wedge x=\lfloor\frac{a}{b}\rfloor)$ defines a subset $S$ of $M$ (which is clearly non-empty, as $\lfloor\frac{a}{b}\rfloor$ exists
for all $a,b\in M$ since $M\models Th(\mathbb{N})$). As $M$ is a model of true arithmetic and hence of full induction, 
$S$ must have a least element $s$. By definition, there must be $k^{\prime}\in M$ such that from $k^{\prime}$ on, the floor functions of the elements of $r$ never drop below $s$. 
Also, there is some $k^{\prime\prime}$ such that, from $k^{\prime\prime}$ on, the elements of $r$ are at most $\frac{1}{2}$ apart. If $k>max\{k^{\prime},k^{\prime\prime}\}$, it follows
that from $k$ on, the only possible values of the floor function are $s$ and $s+1$. We now distinguish the following cases:\\
(1) From some point on, the floor function becomes constantly $s$. Then all elements or $r$ eventually lie between $s$ and $s+1$, hence $s_{K}\leq r<s_{K}+_{K}1_{K}$.\\
(2) The floor function alternates cofinally many times between $s$ and $s+1$. As $r$ converges, this implies that the elements of $r$ get arbitrarily close to $s+1$, so that
$r\sim (s+1)_{K}$.\\
In both cases, $r$ can be rounded down to an element of $M_{K}$.\end{proof}

\begin{prop}: Let $K$ be a real closed field, let $Q$ be a dense subset of $K$, $\varepsilon$ a positive element of $K$ and let $p$ be a polynomial such that, for all $q\in Q$, we have
$p(q)\geq\varepsilon$. Then $p$ has no zero in $K$.\end{prop}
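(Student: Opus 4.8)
The plan is to argue by contradiction using the continuity of polynomials over an ordered field. Suppose toward a contradiction that $p(a)=0$ for some $a\in K$. The guiding idea is that $|p(x)|$ can be forced below the given $\varepsilon$ by pushing $x$ sufficiently close to $a$, after which the density of $Q$ supplies a witness $q\in Q$ with $|p(q)|<\varepsilon$, in direct contradiction with the hypothesis $p(q)\geq\varepsilon>0$. I should stress at the outset that real closedness of $K$ will not actually be needed: the argument goes through for any ordered field in which $Q$ is dense, since it relies only on the elementary properties of the absolute value.

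First I would establish an explicit continuity estimate at $a$, as I cannot appeal to analytic machinery over an abstract field. Writing $p(X)=\sum_{i=0}^{n}c_iX^i$, I factor $p(x)-p(a)=(x-a)\,g(x,a)$, where $g(x,a)=\sum_{i=1}^{n}c_i\sum_{j=0}^{i-1}x^{j}a^{i-1-j}$. Restricting attention to those $x$ with $|x-a|\leq 1$, so that $|x|\leq|a|+1$, each inner sum is bounded in absolute value by $i(|a|+1)^{i-1}$, and hence $|g(x,a)|\leq C$, where $C:=\sum_{i=1}^{n}|c_i|\,i\,(|a|+1)^{i-1}$ is a finite sum of products of elements of $K$ and so lies in $K$.

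Next I would produce the threshold $\delta$. If $C=0$ then $g\equiv 0$, forcing $p$ to be constantly equal to $p(a)=0$, which immediately yields $p(q)=0<\varepsilon$ for any $q\in Q$, a contradiction; so assume $C>0$ and set $\delta:=\min\{1,\varepsilon/C\}$, a positive element of $K$. For every $x$ with $|x-a|<\delta$ I then get $|p(x)-p(a)|=|x-a|\,|g(x,a)|<\delta\,C\leq\varepsilon$, that is $|p(x)|<\varepsilon$. Finally, invoking the density of $Q$ in $K$, I choose $q\in Q$ with $|q-a|<\delta$; then $p(q)\leq|p(q)|<\varepsilon$, contradicting $p(q)\geq\varepsilon$. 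This completes the contradiction and shows $p$ has no zero in $K$.

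The only delicate point, and hence the main obstacle, is the continuity estimate itself carried out purely algebraically: one must verify that the bound $C$ is genuinely an element of $K$ and that every triangle-inequality manipulation is legitimate over an arbitrary ordered field rather than borrowed from real analysis. Once one accepts that the absolute value on an ordered field satisfies $|xy|=|x||y|$, $|x+y|\leq|x|+|y|$, and monotonicity under the order, all the inequalities above are formal consequences, and no appeal to completeness, the intermediate value property, or real closedness is required.
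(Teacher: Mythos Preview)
Your argument is correct and follows the same overall strategy as the paper---assume a zero $a$, use continuity of $p$ at $a$ to find a neighbourhood on which $|p|<\varepsilon$, then invoke density of $Q$ to produce a witness contradicting the hypothesis. The difference is in how continuity is justified: the paper simply observes that, since $K$ is real closed and hence elementarily equivalent to $\mathbb{R}$ as an ordered ring, the first-order $\varepsilon$--$\delta$ continuity of polynomials transfers from $\mathbb{R}$ to $K$, and leaves it at that. You instead write out an explicit Lipschitz-type estimate $|p(x)-p(a)|\leq C\,|x-a|$ on $|x-a|\leq 1$, valid in any ordered field. This buys you generality---as you note, real closedness is never used---at the cost of a few lines of algebra; the paper's version is a one-liner but genuinely consumes the RCF hypothesis.
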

\begin{proof}: As $K$ is an $RCF$, it inherits from $\mathbb{R}$ the property that polynomials are continuous. Hence, when we get arbitrarily close to a zero, the image has to become arbitrarily small, yet,
by assumption, it remains above $\varepsilon>0$, a contradiction.\end{proof}

\textbf{Convention}: If $\phi(x,y,z,\vec{p})$ is an $\mathfrak{L}_{PA}$-formula and $\vec{v}\subseteq M$ is such that $\phi(x,y,z,\vec{v})$ defines an $M$-real, then 
this $M$-real is denoted by $x_{\phi}^{\vec{v}}$.

\begin{lemma}: $K_{M}$ is closed under square roots for positive elements, i.e. if $0<c\in K_{M}$, then there exists $d\in K_{M}$ such that $c=d^2$.\end{lemma}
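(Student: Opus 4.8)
The plan is to build the square root termwise, using the fact that integer square roots exist in $M$ (a consequence of $M\models Th(\mathbb{N})$, already invoked in the ordered-field lemma), and then to transfer the two nontrivial verifications — convergence of the candidate sequence and the defining equation $d^2=c$ — from $\mathbb{N}$ to $M$ via elementary equivalence. Concretely, by the Corollary fix a safe formula $\phi$ and parameters $\vec{v}$ with $c=x_{\phi}^{\vec{v}}=(c_i)_{i\in M}$, where $c_i=\frac{a_i}{b_i}$, $a_i\in M\cup -M$, $b_i\in M-\{0\}$. Unwinding the definition of $<$ for $0<c$ yields $m,k\in M$ with $c_i>\frac{1}{m}$ for all $i>k$; in particular we may take $a_i,b_i>0$ for $i>k$.

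Next I would define the approximating sequence. For each $i$ let $s_i$ be the unique element of $M$ with $s_i^2\le i^2 a_i b_i<(s_i+1)^2$ (existence and uniqueness hold in $M$ since $M\models Th(\mathbb{N})$), and set $d_i:=\frac{s_i}{i\,b_i}$. This is an arithmetic definition, so once convergence is checked, $(d_i)$ is given by a safe formula and hence $d:=(d_i)_{i\in M}\in K_M$. There are then two first-order statements, each true in $\mathbb{N}$ and so, by elementary equivalence, true in $M$. First, $|d_i^2-c_i|=\frac{|s_i^2-i^2 a_i b_i|}{i^2 b_i^2}<\frac{2s_i+1}{i^2 b_i^2}$, and since $s_i\le i\,b_i\sqrt{c_i}$ and $(c_i)$ is bounded (being convergent), this is $O(\tfrac{1}{i})$ and tends to $0$. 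Second, $(d_i)$ is convergent: writing $|d_i-d_j|=\frac{|d_i^2-d_j^2|}{d_i+d_j}$ and bounding the numerator by $|d_i^2-c_i|+|c_i-c_j|+|c_j-d_j^2|$, all three terms go to $0$ (the middle one because $c$ is an $M$-real), while the denominator stays above a fixed positive rational. The first estimate then gives $d^2\sim c$, i.e. $c=d^2$ in $K_M$.

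The main obstacle is precisely the convergence check in the second point, and it is here that positivity of $c$ is indispensable: the termwise square root is only uniformly continuous away from $0$, so a uniform lower bound on $d_i+d_j$ is needed to control $|d_i-d_j|$. That bound is supplied by $c_i>\frac{1}{m}$, which forces $d_i>\frac{1}{m'}$ for a suitable $m'\in M$ once $i$ is large. Everything else reduces to routine arithmetic inequalities that, holding in $\mathbb{N}$, transfer verbatim to $M$. This argument refines the square-root step already used in the proof that $(K_M,+,\cdot,<)$ is an ordered field, now cast inside the safe-formula framework so that membership $d\in K_M$ is explicit rather than merely asserted.
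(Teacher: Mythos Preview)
Your argument is correct, but it takes a different route from the paper's own proof of this lemma. The paper does \emph{not} construct the square root termwise here; instead it invokes Theorem~\ref{recclosed} (closure of $K_{\mathbb{N}}$ under functions recursive in finitely many $K_{\mathbb{N}}$-reals). Since the real square root of $x$ is computable relative to $x$, Theorem~\ref{recclosed} supplies, for each defining formula $\phi$, a formula $\psi$ with $\mathbb{N}\models\forall\vec{v}\,\exists\vec{p}\,(x_\psi^{\vec{p}})^2=x_\phi^{\vec{v}}$; elementary equivalence then pushes this to $M$ and the lemma follows.

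Your approach is more elementary and self-contained: you write down the approximating formula explicitly via integer square roots and verify convergence and $d^2\sim c$ by direct estimates, all of which are first-order and hence transfer from $\mathbb{N}$ to $M$. As you note, this is really a cleaned-up version of the square-root step already buried in the ordered-field lemma. What the paper's argument buys is modularity: the same one-line appeal to Theorem~\ref{recclosed} handles roots of odd-degree polynomials in the next lemma, and more generally any function that is computable relative to its real arguments. What your argument buys is independence from the recursion-theoretic machinery and a clearer account of where positivity of $c$ actually enters (the lower bound on $d_i+d_j$), a point the paper's proof leaves implicit.
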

\begin{proof}: For every $\phi$, there exists $\psi$ such that $\mathbb{N}\models\forall{\vec{v}}\exists{\vec{p}}(x_{\psi}^{\vec{p}})^2=x_{\phi}^{\vec{v}}$ by Theorem \ref{recclosed} 
since the square root of any $x\in\mathbb{R}$ is recursive in $x$. Hence $M$ is a model of the same statement. Now, every $x\in K_{M}$ is defined by some $\phi$ and some parameters from $M$,
it follows that $K_{M}$ is closed under square roots of positive elements.\end{proof}

\begin{lemma} $K_{M}$ is real closed.\end{lemma}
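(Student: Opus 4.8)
The plan is to verify the Artin--Schreier criterion: an ordered field is real closed as soon as every positive element is a square and every polynomial of odd degree has a root. The first condition is exactly the content of the preceding lemma, so it remains only to produce, for every odd-degree polynomial over $K_M$, a zero in $K_M$. I would obtain this precisely as the square-root lemma was obtained, namely by proving the corresponding statement for $\mathbb{N}$ via Theorem \ref{recclosed} and then transferring it to $M$ by elementary equivalence.

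The computable-analysis input is the classical fact that the field of computable reals is real closed; concretely, a root of an odd-degree real polynomial can be computed to arbitrary precision from its coefficients. Indeed, if $p(T)=\sum_{i=0}^{d}c_{i}T^{i}$ has odd degree $d$ with $c_{d}\neq 0$, then $p$ changes sign between two rationals $a<b$ of large absolute value, and such $a,b$ can be found effectively relative to the $c_i$; a bisection that maintains a bracket across which $p$ changes sign then converges to a root. The only delicate point is that the sign of $p$ at a rational is merely semidecidable, which is handled by always retreating to a nearby rational at which $p$ is nonzero, such a rational existing because a nonzero polynomial has isolated zeros. Thus a selected root $Y$ is recursive (relative to oracles for the coefficients) in $c_{0},\dots,c_{d}$ and is convergent, so by Theorem \ref{recclosed}(1) we get $Y\in K_{\mathbb{N}}$ whenever $c_{0},\dots,c_{d}\in K_{\mathbb{N}}$.

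Now fix safe $\mathfrak{L}_{PA}$-formulas $\phi_{0},\dots,\phi_{d}$. Since the program computing such a root can be expressed as a formula $\tilde{\phi}_{P}$ after eliminating the coefficients by their definitions, exactly as in the proof of Theorem \ref{recclosed}, there is a formula $\psi$ such that
\[
\mathbb{N}\models\forall\vec{v}_{0}\cdots\vec{v}_{d}\Big(x_{\phi_{d}}^{\vec{v}_{d}}\neq 0\rightarrow\exists\vec{p}\ \sum_{i=0}^{d}x_{\phi_{i}}^{\vec{v}_{i}}\cdot\big(x_{\psi}^{\vec{p}}\big)^{i}=0\Big),
\]
where the field operations on $M$-reals and the relation $=0$ are rendered by the appropriate $\mathfrak{L}_{PA}[X,Y]$-formulas, just as $<$ is in the earlier proposition. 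This is an $\mathfrak{L}_{PA}$-sentence true in $\mathbb{N}$, hence it lies in $Th(\mathbb{N})$ and is therefore true in $M$. Since every element of $K_{M}$ is of the form $x_{\phi}^{\vec{v}}$ for a safe $\phi$ and parameters $\vec{v}\subseteq M$ by the corollary above, every odd-degree polynomial over $K_{M}$ has a root in $K_{M}$; combined with closure under square roots of positives, this shows $K_{M}$ is real closed.

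The main obstacle is the computable-analysis claim in the second paragraph: one must be sure that a root is genuinely recursive in the coefficients in a way that is uniform enough to be captured by a single program $P$, which is what the careful bisection argument provides (the guard $x_{\phi_{d}}^{\vec{v}_{d}}\neq 0$ being needed so that the degree is honestly odd and a sign change is guaranteed). Once that is in place, everything else is a direct transcription of the square-root lemma.
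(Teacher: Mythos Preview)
Your argument is correct and follows essentially the same route as the paper: use the Artin--Schreier criterion, invoke the already-proved square-root closure, and for odd-degree polynomials appeal to Theorem~\ref{recclosed} (a root is recursive in the coefficients) to get a single formula $\psi$ so that the corresponding $\forall\exists$-statement holds in $\mathbb{N}$, then transfer to $M$ by elementary equivalence. The only difference is that you spell out the bisection argument behind the computable-analysis claim, which the paper simply asserts.
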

\begin{proof}: It suffices to show that $K_{M}$ is formally real, closed under square roots for positive elements and that, for every $n\in\mathbb{N}$ and $c_{0},...,c_{2n+1}\in K_{M}$ with $c_{2n+1}\neq 0$, 
the polynomial $p(x)=\Sigma_{i=0}^{2n+1}c_{i}x^{i}$ has a root in $K_{M}$. We have already shown that $K_{M}$ is closed under square roots for positive elements and formally real.\\
The proof that polynomials of odd degree have roots is similar to the proof of root-closure for positive elements: Such a root is (over $\mathbb{R}$) recursive in the coefficients of the polynomial. Hence, for every
$n\in\mathbb{N}$ and every sequence $(\phi_{0},...,\phi_{2n+1})$ of formulas, there exists a formula $\psi$ such that we have \begin{center}
$\mathbb{N}\models\forall{\vec{v}_{0},...,\vec{v}_{2n+1}}((x_{\phi_{2n+1}}^{\vec{v}_{2n+1}}\neq 0)\implies(\exists{\vec{v}}(\Sigma_{i=1}^{2n+1}x_{\phi_{i}}^{\vec{v}_{i}}(x_{\phi}^{\vec{v}})^{i}=0)))$.\end{center}
So $M$ is a model of the same statement. Thus every polyomial of odd degree over $K_{M}$ has a root in $K_{M}$. \end{proof}

\section{Functions on $K_{M}$}

In this section, we start considering analysis on $K_{M}$. To this purpose, we need to define functions on $K_{M}$. If properties of these functions are to be preserved between different $K_{M}$s, these will have to 
be sufficiently explicitely definable in $M$. This is made precise by the following definition.

\begin{defini}
For $n\in\mathbb{N}$, $f:K_{M}^{n}\rightarrow K_{M}$ is $M$-definable iff there are $\phi[X_1,...,X_n]\in\mathfrak{L}_{PA}[X_1,...,X_n]$ (language of arithmetic with $n$ extra predicate symbols $X_1,...,X_n$) and $\vec{v}\in M$ such that, for any
 $\vec{x}\in K_{M}^{n}$, $\phi(X_1\mapsto x_1,X_2\mapsto x_2,...,X_n\mapsto x_n,\vec{v},i,j,k)$ defines an $M$-real $y$ such that $f(x)=y$. Denote by $\text{Def}^{n}(M)$ the set of $n$-ary $M$-definable functions and 
let $\text{Def}(M):=\bigcup_{i\in\mathbb{N}}\text{Def}^{i}(M)$.
\end{defini}

\begin{prop}
If $f$ is $M$-definable, then it is, for each $\psi$, uniformly definable (in the parameter $\vec{v}$) for all $M$-reals definable by $\psi$, i.e. there is a formula $\psi^{\prime}$, depending on $\psi$ but not on $\vec{v}$, such that
$\psi^{\prime}(\vec{v},...)$ defines the image of each $x$ if $x$ is of the form $x_{\psi}^{\vec{v}}$ for some $\vec{v}\in M$.
\end{prop}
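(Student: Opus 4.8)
The plan is to obtain $\psi'$ by the predicate-elimination procedure already used in the proof of Theorem~\ref{recclosed}, performed once and for all with the parameters of the input kept as free variables. Since $f$ is $M$-definable, I would first fix a formula $\phi[X_1,\dots,X_n]\in\mathfrak{L}_{PA}[X_1,\dots,X_n]$ and a parameter tuple $\vec v$ witnessing this, so that for every $\vec x\in K_M^n$ the formula $\phi(X_1\mapsto x_1,\dots,X_n\mapsto x_n,\vec v,i,j,k)$ defines the $M$-real $f(\vec x)$. The predicate symbols $X_\ell$ occur in $\phi$ only in atomic subformulas expressing that the $\ell$-th input real takes a given value at a given index. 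The idea is to replace every such atomic occurrence by the formula $\psi$ evaluated at that index, using a fresh parameter tuple $\vec w_\ell$ for the $\ell$-th argument.

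Concretely, writing $\psi(\vec w,t,a,b)$ for the formula asserting that $x_\psi^{\vec w}$ takes the value $a/b$ at position $t$, I would let $\psi'$ be the formula obtained from $\phi$ by substituting, for each atomic occurrence ``$X_\ell$ has value $(a,b)$ at position $t$'', the subformula $\psi(\vec w_\ell,t,a,b)$, and by adjoining $\vec v$ together with $\vec w_1,\dots,\vec w_n$ to the free variables. By construction $\psi'$ contains no predicate symbols, so it is a genuine $\mathfrak{L}_{PA}$-formula; and it depends only on $\phi$ (hence on $f$) and on $\psi$, not on any particular values of the parameters, since the $\vec w_\ell$ are left free. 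This is exactly the uniformity the statement demands.

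The remaining task is to verify that $\psi'$ does what is claimed: for any parameters $\vec w_1,\dots,\vec w_n$ for which each $\psi(\vec w_\ell,\cdot,\cdot,\cdot)$ defines an $M$-real, the substitution reproduces term by term the effect of interpreting $X_\ell$ by the sequence $x_\psi^{\vec w_\ell}$, so that $\psi'(\vec v,\vec w_1,\dots,\vec w_n,i,j,k)$ and $\phi(X_\ell\mapsto x_\psi^{\vec w_\ell},\vec v,i,j,k)$ define the same $M$-real, which by $M$-definability of $f$ is $f(x_\psi^{\vec w_1},\dots,x_\psi^{\vec w_n})$. In the unary case this is precisely the assertion that $\psi'(\vec w,\cdot)$ defines $f(x_\psi^{\vec w})$ for every $\vec w$.

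The step I expect to be the main obstacle is the faithfulness of the substitution: I must check that replacing the oracle predicate by its defining formula really preserves the value of $\phi$, and in particular that the result is again a convergent, total pre-real rather than an ill-formed function. This is ensured because the substitution is only ever applied for inputs $\vec x$ that already lie in $K_M$, where $\phi$ genuinely defines the $M$-real $f(\vec x)$; if one further wants $\psi'$ to be safe for all parameter values, one passes to its associated safe formula using the safe-formula lemma established above, which leaves the values on admissible parameters unchanged.
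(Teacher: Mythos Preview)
Your proposal is correct and follows exactly the paper's approach: the paper's entire proof is the one-line instruction to ``plug in the definition instead of the second-order variable $X$,'' which is precisely the predicate-elimination substitution you carry out (with the parameters left free). Your additional remarks about verifying faithfulness of the substitution and optionally passing to the safe version are sound elaborations of the same idea.
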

\begin{proof}
 Simply plug in the definition instead of the second-order variable $X$ from the definition above.
\end{proof}

\begin{prop}
 $\text{Def}(M)$ contains all constant functions and is closed under composition.
\end{prop}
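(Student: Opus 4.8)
The plan is to verify the two assertions separately, using the definition of $M$-definability in terms of $\mathfrak{L}_{PA}[X_1,\dots,X_n]$-formulas with parameters.

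First I would handle the constant functions. Let $c\in K_M$ be fixed. By Corollary to the reduction lemma, $c=x_\phi^{\vec w}$ for some safe formula $\phi$ and some parameter sequence $\vec w\in M$. To realize the constant function $g\colon K_M^n\to K_M$ with value $c$, I take the $\mathfrak{L}_{PA}[X_1,\dots,X_n]$-formula $\chi[X_1,\dots,X_n]$ which simply ignores all the predicate symbols $X_1,\dots,X_n$ and asserts $\phi(\vec w,i,j)$. Concretely, $\chi(X_1\mapsto x_1,\dots,X_n\mapsto x_n,\vec w,i,j)$ is just $\phi(\vec w,i,j)$, so for every $\vec x\in K_M^n$ it defines the $M$-real $c$. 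Thus $g\in\mathrm{Def}^n(M)$, and this works for every arity $n$, so all constant functions lie in $\mathrm{Def}(M)$.

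Next I would treat closure under composition. Suppose $g\in\mathrm{Def}^m(M)$ is defined by $\theta[Y_1,\dots,Y_m]$ with parameter $\vec u$, and $f_1,\dots,f_m\in\mathrm{Def}^n(M)$ are defined by $\phi_1[X_1,\dots,X_n],\dots,\phi_m[X_1,\dots,X_n]$ with parameters $\vec v_1,\dots,\vec v_m$ respectively. I want an $\mathfrak{L}_{PA}[X_1,\dots,X_n]$-formula defining the composite $h(\vec x)=g(f_1(\vec x),\dots,f_m(\vec x))$. The idea is to substitute: in $\theta$, each occurrence of the predicate $Y_\ell$ applied to some argument is replaced by the formula that $\phi_\ell$ provides, exactly as in the proof of the uniform-definability proposition, where one plugs a definition in for a second-order variable. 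This is legitimate because each $f_\ell$ is itself given by a first-order formula in the $X_i$ (together with its parameters $\vec v_\ell$), so the substitution produces a single $\mathfrak{L}_{PA}[X_1,\dots,X_n]$-formula $\eta$, with combined parameter sequence $(\vec u,\vec v_1,\dots,\vec v_m)$, that for each $\vec x\in K_M^n$ defines the $M$-real $g(f_1(\vec x),\dots,f_m(\vec x))$.

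The main obstacle, and the point that deserves care rather than a one-line "trivial", is the bookkeeping of the substitution: the formula $\theta$ accesses its inputs $Y_\ell$ not as completed reals but through their Cauchy-sequence representations (indices $i$, numerator/denominator pairs), and replacing $Y_\ell$ by $\phi_\ell$ must correctly thread the representing sequences of the $f_\ell(\vec x)$ into the places where $\theta$ reads off approximations of its $\ell$-th argument. I would argue that this is exactly the mechanism already sanctioned by the uniform-definability proposition together with the reduction lemma (which guarantees we may take each $\phi_\ell$ safe, so that the resulting nested formula still provably defines a total convergent sequence in $Th(\mathbb{N})$); the verification that the composite formula defines the intended $M$-real then reduces to unwinding these definitions and is otherwise routine.
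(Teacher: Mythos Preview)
Your proposal is correct and follows exactly the approach the paper has in mind; the paper's own proof consists of the single word ``Trivial.'' You have simply spelled out the two obvious verifications---ignore the predicate symbols for constants, substitute defining formulas for the second-order variables for composition---together with the bookkeeping caveat, which is more detail than the author deemed necessary.
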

\begin{proof}
 Trivial.
\end{proof}

\begin{defini}
The exponential function $exp_{M}:K_{M}\rightarrow K_{M}$ (with base $2$) is defined as follows:\\
For elements of $M^{>0}$, exponentiation with arbitrary bases is given by the usual arithmetical definition.\\
Now, for $a,b,n\in M^{>0}$, we let $appr(n,a,b)$ be the largest $m\in M$ such that $m^{b}\leq n^{b}a$.\\
Next, for $K_{M}^{>0}\ni x=(\frac{a_i}{b_i})_{i\in M}$, we assume without loss of generality that $a_{i}$ and $b_{i}$ are positive for all $i\in M$ and set\\ $exp(x):=(\frac{appr(i,exp(a_{i}),b_{i})}{i})_{i\in M}$. Finally,
if $x\in K_{M}^{<0}$, we suppose without loss of generality that for all $i$, $a_i<0$ and $b_i>0$ and let\\ $exp_{M}(x)=(\frac{i}{appr(i,exp(a_{i}),b_{i})})_{i\in M}$.
\end{defini}

\textbf{Convention}: Whenever possible without causing confusion, we will drop the subscripts.

\begin{lemma}
$exp_{\mathbb{N}}=exp_{2}|K_{\mathbb{N}}$, where $exp_{2}$ is the usual real exponential function with base $2$.
\end{lemma}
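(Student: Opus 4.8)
The plan is to exploit that for $M=\mathbb{N}$ the notion of convergence introduced above is just ordinary Cauchy convergence, so that every $\mathbb{N}$-real $x=(a_i/b_i)_{i\in\mathbb{N}}$ has a genuine real limit. The assignment $[x]_{\sim}\mapsto\lim_{i\to\infty}a_i/b_i$ then realises $K_{\mathbb{N}}$ as the (real closed) subfield of $\mathbb{R}$ consisting of limits of arithmetically definable convergent rational sequences: since $+$ and $\cdot$ were defined termwise, limits add and multiply correctly, two $\mathbb{N}$-reals are $\sim$-equivalent exactly when they have the same limit, and the order $x<y$ (meaning $m(y_l-x_l)>1$ eventually for some $m$) agrees with the order of the limits. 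Under this identification, the assertion $exp_{\mathbb{N}}=exp_2|K_{\mathbb{N}}$ says precisely that, for every $r\in K_{\mathbb{N}}$ represented by some $x=(a_i/b_i)$, the sequence produced by the definition of $exp_{\mathbb{N}}$ converges in $\mathbb{R}$ to the real number $2^r$. Hence it suffices to compute that limit.

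First I would treat $r>0$, choosing (as the definition permits) a representative with $a_i,b_i>0$. The single computation driving everything is the identity $appr(i,2^{a_i},b_i)=\lfloor i\cdot 2^{a_i/b_i}\rfloor$: as all quantities are positive, $m^{b_i}\le i^{b_i}2^{a_i}$ is, after taking $b_i$-th roots, equivalent to $m\le i\cdot 2^{a_i/b_i}$, so the largest such integer is $\lfloor i\cdot 2^{a_i/b_i}\rfloor$. Here $2^{a_i}=exp(a_i)$ is the honest (possibly astronomical) natural number given by arithmetic exponentiation, so $appr$ is well defined among genuine naturals. Dividing by $i$ and using $y-1<\lfloor y\rfloor\le y$ yields the sandwich
\[
2^{a_i/b_i}-\tfrac{1}{i}\ <\ \frac{appr(i,2^{a_i},b_i)}{i}\ \le\ 2^{a_i/b_i}.
\]
Since $a_i/b_i\to r$ and $t\mapsto 2^t$ is continuous on $\mathbb{R}$, we get $2^{a_i/b_i}\to 2^r$, while $\tfrac1i\to 0$; the squeeze then gives $exp_{\mathbb{N}}(x)\to 2^r$. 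This simultaneously shows that the defining sequence is convergent (so $exp_{\mathbb{N}}(x)$ really lies in $K_{\mathbb{N}}$) and that its value is $2^r$.

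The case $r<0$ reduces to this: taking a representative with $a_i<0<b_i$ and reading $exp(a_i)$ as $2^{-a_i}=2^{|a_i|}$ (the only sensible reading, since arithmetic exponentiation needs a nonnegative exponent), the definition gives $i/appr(i,2^{|a_i|},b_i)=i/\lfloor i\cdot 2^{|a_i|/b_i}\rfloor$, which is exactly the reciprocal of the positive-case expression applied to $-x$; by the first case it converges to $1/2^{-r}=2^r$. The value $r=0$ is absorbed into the positive case after passing to an eventually positive representative, giving $2^0=1$. Combining the three cases, $exp_{\mathbb{N}}(r)=2^r=exp_2(r)$ for every $r\in K_{\mathbb{N}}$.

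The proof is essentially bookkeeping once the limit identity is in place, so I do not expect a serious obstacle; the only point requiring genuine care is the interpretation of the primitive data. One must confirm that $appr$ really computes the integer part of $i\cdot 2^{x_i}$, and that the sign conventions (including the reading of $exp(a_i)$ for negative $a_i$) make the two branches of the definition produce rational approximations to $2^{x_i}$ from below and above, respectively. Independence of the chosen representative is then automatic, since any two representatives of $r$ have the same real limit and $2^r$ depends only on $r$.
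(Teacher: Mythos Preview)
Your argument is correct and is precisely the computation underlying the paper's one-word proof (``Trivial.''). The key identity $appr(i,2^{a_i},b_i)=\lfloor i\cdot 2^{a_i/b_i}\rfloor$ is exactly right, and the sandwich together with continuity of $t\mapsto 2^t$ gives the conclusion; the paper simply regards all of this as evident from the definition.
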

\begin{proof}
Trivial.
\end{proof}

\begin{lemma}{\label{expcont}}
For every $M\models Th(\mathbb{N})$, $exp_{M}$ is continuous.
\end{lemma}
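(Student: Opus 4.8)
The plan is to prove continuity at an arbitrary point $c\in K_M$ by establishing a local Lipschitz-type bound and then reading off the $\varepsilon$--$\delta$ condition. Concretely, I would aim to produce a fixed $M$-real $L>0$, depending only on $c$, such that $|exp_M(x)-exp_M(c)|\le L\cdot|x-c|$ holds for every $x\in K_M$ with $|x-c|\le 1$. Granting this, continuity at $c$ is immediate: given $\varepsilon>0$ in $K_M$, put $\delta:=\min\{1,\varepsilon/L\}$, which is a positive element of the field $K_M$; then $|x-c|<\delta$ forces $|exp_M(x)-exp_M(c)|\le L\cdot|x-c|<L\cdot(\varepsilon/L)=\varepsilon$. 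Since $c$ is arbitrary, this yields continuity on all of $K_M$.

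The core of the argument is the bound itself, which I would prove at the level of representing sequences. Writing $x=(a_i/b_i)_{i\in M}$ and $c=(c_i)_{i\in M}$, the defining property of $appr$ shows that, in every one of the sign-cases of the definition, the $i$-th term $exp_M(x)_i$ approximates the genuine value $2^{x_i}$ (base-$2$ exponential evaluated at the rational $x_i=a_i/b_i$) to within an error of the form $C/i$ for a fixed $M$-real $C$. Hence, for $i\in M$, the triangle inequality gives $|exp_M(x)_i-exp_M(c)_i|\le |2^{x_i}-2^{c_i}|+C'/i$, and the elementary estimate $|2^{x_i}-2^{c_i}|\le 2^{\lceil\max(x_i,c_i)\rceil}\,|x_i-c_i|$ bounds the main term. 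Because $c$ is a fixed $M$-real, the sequence $(c_i)$ is bounded by some $B_0\in M$, and the restriction $|x-c|\le 1$ keeps $x_i$ below $c_i+2$ eventually; thus $2^{\lceil\max(x_i,c_i)\rceil}\le 2^{B_0+2}=:L$, a genuine constant $M$-real. Collecting terms, $|exp_M(x)_i-exp_M(c)_i|\le L\,|x_i-c_i|+C'/i$.

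The two remaining points are transfer and the vanishing of the error. All the inequalities used above --- the quality of the $appr$-approximation, the monotonicity and convexity estimates for integer/rational exponentiation underlying $|2^{s}-2^{t}|\le 2^{\lceil\max\rceil}|s-t|$, and the boundedness bookkeeping --- are first-order arithmetic statements that are true in $\mathbb{N}$ (they are elementary facts about integer exponentiation and the $appr$-approximants, cleared of denominators); since $M\models Th(\mathbb{N})$, they hold verbatim in $M$. Finally, the sequence $(C'/i)_{i\in M}$ represents $0$ in $K_M$, because for each $m\in M$ one has $C'/i<1/m$ for all $i>mC'$, again a consequence of $M\models Th(\mathbb{N})$. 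Passing to $M$-reals, the termwise bound therefore upgrades to $|exp_M(x)-exp_M(c)|\le L\,|x-c|$ in $K_M$, which is exactly the Lipschitz bound sought.

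The step I expect to be the main obstacle is the clean separation of scales in the second paragraph: one must verify that the approximation error in $appr$ is genuinely of order $1/i$ with a coefficient that is a fixed $M$-real (rather than something that grows with the nonstandard index $i$), and that the corresponding error sequences really represent $0$ in $K_M$. A related nuisance is the piecewise, sign-dependent definition of $exp_M$: one has to check that all cases yield the same $2^{x_i}$-approximation, so that the estimate is uniform and, in particular, continuity is not lost where the definition switches at $x=0$. An alternative that sidesteps some of this bookkeeping is to first establish the functional equation $exp_M(x)\cdot exp_M(y)\sim exp_M(x+y)$ together with $exp_M(-x)=1/exp_M(x)$, reducing continuity everywhere to continuity at $0$ via $exp_M(x)-exp_M(c)=exp_M(c)\,(exp_M(x-c)-1)$ and the inequality $2^{h}-1\le h$ for $0\le h\le 1$; but proving that functional equation requires the very same null-sequence estimates, so I would keep the direct route as the primary line of attack.
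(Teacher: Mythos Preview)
Your approach is sound and the obstacles you flag are not serious: from the definition of $appr$ one gets $0\le 2^{a/b}-appr(i,2^{a},b)/i<1/i$ directly, so the error term really is $1/i$ with coefficient $1$ (and $2/i$ after inverting in the negative case); the Lipschitz inequality you need, once cleared of denominators, is a genuine $\mathfrak{L}_{PA}$-sentence true in $\mathbb{N}$ and hence in $M$, even though its \emph{justification} over $\mathbb{N}$ passes through the irrational values $2^{a/b}$. The paper, however, argues quite differently. Rather than a quantitative Lipschitz bound at the level of representing sequences, it proceeds qualitatively: it first transfers monotonicity of $exp$ between rationals and $M$-reals (for each safe $\phi$, the sentence $\forall\vec{v}\,\forall p,q(\frac{p}{q}<x_{\phi}^{\vec{v}}\to exp(\frac{p}{q})<exp(x_{\phi}^{\vec{v}}))$ holds in $\mathbb{N}$, hence in $M$), then uses density of $\text{ff}(M)$ in $K_M$ to upgrade this to monotonicity on all of $K_M$; finally, it transfers an $\varepsilon$--$\delta$ statement relating $x_{\phi}^{\vec{v}}$ to nearby rationals and squeezes an arbitrary $y$ between two such rationals via monotonicity. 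Your route yields more (local Lipschitz continuity, with an explicit constant $2^{B_0+2}$ once a bound $B_0$ for the chosen representative of $c$ is fixed) and avoids the separate monotonicity step, at the cost of more bookkeeping with $appr$ and the sign cases; the paper's route is softer, leans on the structural fact that $\text{ff}(M)$ is dense in $K_M$, and is the template it reuses in the remark following the lemma for functions that are only piecewise monotone.
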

\begin{proof}
We first note that continuity holds on the quotient field of $M$: As $\mathbb{N}\models \forall{\varepsilon>0}\exists{\delta>0}\forall{p,q,r,s}(|\frac{p}{q}-\frac{r}{s}|<\delta\implies |exp(\frac{p}{q})-exp(\frac{r}{s})|<\varepsilon)$, 
$M$ is a model of the same statement.\\
Now we show that, for any $\mathfrak{L}_{PA}$-formula $\phi$, every $\vec{v}\in M$ and every $q\in \text{ff}(M)$, $q<x_{\phi}^{\vec{v}}$ implies $exp_{M}(q)<exp_{M}(x_{\phi}^{\vec{v}})$ and $x_{\phi}^{\vec{v}}<q$ implies 
$exp_{M}(x_{\phi}^{\vec{v}})<exp_{M}(q)$. This follows from the fact that, for all $\phi\in\mathfrak{L}_{PA}$, we have 
$\mathbb{N}\models\forall{\vec{v}}\forall{p}\forall{q\neq0}((x_{\phi}^{\vec{v}}<\frac{p}{q})\implies(exp(x_{\phi}^{\vec{v}_{1}})<exp(\frac{p}{q})))$ so that the same statement holds in $M$ (and similarly for the other inequality).\\
As every $x\in K_{M}$ is presentable as some $x_{\phi}^{\vec{v}}$, it follows that for all $x\in K_{M}$, $q\in \text{ff}(M)$, we have that $x<q$ implies $exp_{M}(x)<exp_{M}(q)$ and that $q<x$ implies $exp_{M}(q)<exp_{M}(x)$.
But now, as $\text{ff}(M)$ is dense in $K_{M}$ (since $M$ is an $IP$ of $K_{M}$), if $x,y\in K_{M}$ are such that $x<y$, then there exists $q\in \text{ff}(M)$ such that $x<q<y$. It follows that $exp_{M}(x)<exp_{M}(q)<exp_{M}(y)$,
so $exp_{M}(x)<exp_{M}(y)$. Hence $exp_{M}$ is monotonic.\\
The proof that $exp_{M}$ is continuous is now quite straightforward: Let $x\in K_{M}$, then $exp(q)<exp(x)<exp(p)$ for all $q,p\in \text{ff}(M)$ with $q<x<p$. Let $\varepsilon>0$ be given. 
Pick $\delta>0$ such that, for all $\frac{p}{q}$ ($p,q\in M$) with $|x-\frac{p}{q}|<\delta$,
we have that $|exp(x)-exp(\frac{p}{q})|<\varepsilon$.\\
To see that such a $\delta$ exists, let $x=x_{\phi}^{\vec{v}}$, where $\phi$ is safe. Clearly, we have
\begin{center}
$\mathbb{N}\models\forall{\vec{p}}\forall{m>0}\exists{n>0}\forall{p,q\neq0}(|x-\frac{p}{q}|<\frac{1}{n}\rightarrow|exp(x)-exp(\frac{p}{q})|<\frac{1}{m})$.
\end{center}
Hence $M$ is a model of the same statement. If we take $0<m\in M$ large enough such that $\frac{1}{m}<\varepsilon$ - which is possible since $M$ is an integer part of $K_{M}$ - and take $\vec{p}=\vec{v}$,
this guarantees the existence of some $\delta\in K_{M}$ as desired.\\
Now, by monotonicity, it holds for $y\in K_{M}\cap]x-\delta,x+\delta[$ that $|exp(x)-exp(y)|<|exp(x)-exp(\frac{a}{b})|<\varepsilon$, where $a,b\in M$ are such that
either $x-\delta<\frac{a}{b}<y<x$ oder $x<y<\frac{a}{b}<x+\delta$. (That such a choice of $\frac{a}{b}$ is always possible is again clear as $\text{ff}(M)$ is dense in $K_{M}$.) Hence $\delta$ is such that, for all
$y\in K_{M}$, $|x-y|<\delta$ implies $exp_{M}(x)-exp_{M}(y)<\varepsilon$. As $x$ and $\varepsilon$ were arbitrary, it follows that $exp_{M}$ is continuous.

\end{proof}

\textbf{Remark}: The monotonicity is crucial in this argument; it can, however, be relaxed for other functions by splitting $K_{M}$ into intervalls on which they are monotonic. This is particularly useful when one wants to turn to other functions.

\begin{lemma}{\label{cont}}
 Let $f_{1},...,f_{n},g$ be $M$-definable continuous functions. Then $g(f_1,...,f_n)$ is also $M$-definable and continuous. Consequently, every function obtained from $+,\cdot,exp$ by composition is continuous.
\end{lemma}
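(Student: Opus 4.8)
The plan is to treat the two claims separately: $M$-definability of the composite and its continuity. The definability is essentially already available. Reading $g(f_1,\dots,f_n)$ as the map $\vec{x}\mapsto g\big(f_1(\vec{x}),\dots,f_n(\vec{x})\big)$, where $f_1,\dots,f_n\colon K_M^{k}\to K_M$ share a common arity $k$ and $g\colon K_M^{n}\to K_M$, its $M$-definability is an instance of the earlier proposition that $\text{Def}(M)$ is closed under composition: I would substitute the $\mathfrak{L}_{PA}$-formula defining each $f_i$ into the $i$-th predicate slot of the formula defining $g$, exactly the ``plug in the definition'' move used for the uniform-definability proposition. So the genuine task is continuity.

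For continuity I would run the classical $\varepsilon$--$\delta$ argument for the composition of continuous maps, but carried out entirely inside the ordered field $K_M$. First I fix a notion of proximity on $K_M^{m}$, say the max-norm $\|\vec{x}-\vec{a}\|:=\max_{1\le j\le m}|x_j-a_j|$; the finite maxima and minima this requires exist in any ordered field. Now fix $\vec{a}\in K_M^{k}$ and set $\vec{b}:=(f_1(\vec{a}),\dots,f_n(\vec{a}))$. Given $\varepsilon>0$ in $K_M$, continuity of $g$ at $\vec{b}$ yields $\eta>0$ with $\|\vec{y}-\vec{b}\|<\eta\implies|g(\vec{y})-g(\vec{b})|<\varepsilon$, and continuity of each $f_i$ at $\vec{a}$ yields $\delta_i>0$ with $\|\vec{x}-\vec{a}\|<\delta_i\implies|f_i(\vec{x})-f_i(\vec{a})|<\eta$. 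Putting $\delta:=\min_{1\le i\le n}\delta_i>0$, any $\vec{x}$ with $\|\vec{x}-\vec{a}\|<\delta$ gives $|f_i(\vec{x})-f_i(\vec{a})|<\eta$ for all $i$, hence $\|(f_1(\vec{x}),\dots,f_n(\vec{x}))-\vec{b}\|<\eta$, and therefore $|g(f_1(\vec{x}),\dots,f_n(\vec{x}))-g(f_1(\vec{a}),\dots,f_n(\vec{a}))|<\varepsilon$. As $\vec{a}$ and $\varepsilon$ were arbitrary, the composite is continuous.

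The one point that needs care is that $K_M$ need not be Archimedean when $M$ is nonstandard, so no appeal to real-analytic intuition is legitimate. What makes the argument survive is that it invokes only the ordered-field structure of $K_M$ --- existence of finite maxima and minima, and availability of division --- all of which hold here. I therefore do not expect a serious obstacle beyond bookkeeping with arities and with the chosen norm; the ``hard part'', such as it is, is merely checking that every inequality used lives in $K_M$ and that the quantifiers range over $K_M$ rather than over some ambient copy of $\R$.

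Finally, the ``consequently'' clause drops out. Both $+$ and $\cdot$ are $M$-definable, being the field operations computed coordinatewise on representatives, and continuous by the standard ordered-field estimates $|(x+y)-(a+b)|\le|x-a|+|y-b|$ and the usual product bound near $(a,b)$; $\exp_M$ is $M$-definable by its defining formula and continuous by Lemma \ref{expcont}. Repeated application of the closure statement just established then yields both the $M$-definability and the continuity of every function built from $+,\cdot,\exp$ by composition.
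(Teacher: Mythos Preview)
Your proposal is correct and follows essentially the same approach as the paper: definability by formula substitution, continuity because compositions of continuous functions are continuous. The paper's own proof is in fact terser than yours---it simply asserts both facts without spelling out the $\varepsilon$--$\delta$ argument or the continuity of $+$ and $\cdot$---so your additional care about the non-Archimedean setting and the explicit bookkeeping only makes the argument more complete, not different in substance.
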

\begin{proof}
 $M$-definability of $g(f_1,...,f_n)$ is obvious by substituting formulas. Continuity is also clear, as compositions of continuous functions are continuous.
\end{proof}

\begin{thm}{\label{reflection}}
 $(K_{\mathbb{N}},+,\cdot,exp,<)\equiv_{el}(\mathbb{R},+,\cdot,exp,<)$.
\end{thm}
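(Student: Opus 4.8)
The plan is to realise $(K_{\mathbb{N}},+,\cdot,exp_{\mathbb{N}},<)$ as a \emph{substructure} of $(\mathbb{R},+,\cdot,exp_{2},<)$ and then to upgrade this to an \emph{elementary} substructure, whence elementary equivalence follows at once. That $K_{\mathbb{N}}$ is a substructure is already in hand: it is a subfield of $\mathbb{R}$ by the ordered--field lemma, it is closed under $exp_{\mathbb{N}}$ by the very definition of $exp_{\mathbb{N}}$, and $exp_{\mathbb{N}}=exp_{2}|K_{\mathbb{N}}$ is literally the restriction of the genuine real exponential. (The base is immaterial, $(\mathbb{R},exp_{2})$ and $(\mathbb{R},exp_{e})$ being interdefinable, so I shall freely invoke results about $\mathbb{R}_{\exp}$.) The entire weight of the theorem therefore falls on the Tarski--Vaught test for $K_{\mathbb{N}}\preceq\mathbb{R}$.

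The two external inputs I would use are Wilkie's theorems that $\mathbb{R}_{\exp}$ is model complete and o-minimal. Model completeness lets me rewrite any formula, modulo $Th(\mathbb{R}_{\exp})$, as an existential one and in fact present each definable set as a projection of the zero set of a single exponential polynomial (a term built from $+,\cdot,exp$). O-minimality supplies definable Skolem functions: for each $\phi(\vec x,y)$ there is a $\emptyset$-definable $f_{\phi}$ with $\mathbb{R}\models\forall\vec x(\exists y\,\phi(\vec x,y)\rightarrow\phi(\vec x,f_{\phi}(\vec x)))$. With these in place, the Tarski--Vaught test collapses to a single closure statement, which is the heart of the matter and the step I expect to be the main obstacle:

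\textbf{Key Lemma (to be proved).} For every $\emptyset$-definable function $f$ of $\mathbb{R}_{\exp}$ and every $\vec a\in K_{\mathbb{N}}$ one has $f(\vec a)\in K_{\mathbb{N}}$. I would prove this as follows. By o-minimality the value $f(\vec a)$ lies in a $0$-dimensional set definable over $\vec a$; by model completeness and cell decomposition this set is located among the finitely many zeros (equivalently, sign-changes) of finitely many exponential polynomials $F_{j}(\vec a,y)$. Each map $y\mapsto F_{j}(\vec a,y)$ is a composition of $+,\cdot,exp_{\mathbb{N}}$ with the fixed data $\vec a\in K_{\mathbb{N}}$, hence is $\mathbb{N}$-definable and continuous by Lemma \ref{cont}. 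Now I would locate the relevant root by bisection: for a rational $r\in\text{ff}(\mathbb{N})$ the value $F_{j}(\vec a,r)$ is a real recursive in $\vec a$ (since $exp_{2}$ of a definable real is recursive in that real, as $exp_{\mathbb{N}}=exp_{2}|K_{\mathbb{N}}$), so its sign at a rational lying strictly inside a simple sign-change interval is decidable relative to $\vec a$. Bisecting such an interval therefore yields a Cauchy sequence recursive in $\vec a$ converging to the root, and Theorem \ref{recclosed}(1) places that root in $K_{\mathbb{N}}$. Enumerating the finitely many critical reals recursively in $\vec a$ and selecting the one singled out by the $\vec a$-definable condition gives $f(\vec a)\in K_{\mathbb{N}}$.

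Granting the Key Lemma, I finish by Tarski--Vaught. Given $\vec a\in K_{\mathbb{N}}$ and a formula $\phi$ with $\mathbb{R}\models\exists y\,\phi(\vec a,y)$, the Skolem witness $f_{\phi}(\vec a)$ witnesses $\phi$ in $\mathbb{R}$ and, by the Key Lemma, already lies in $K_{\mathbb{N}}$; iterating over the coordinates of a tuple $\vec y$ handles whole blocks of existential quantifiers. Hence $K_{\mathbb{N}}\preceq\mathbb{R}$, and in particular $(K_{\mathbb{N}},+,\cdot,exp,<)\equiv_{el}(\mathbb{R},+,\cdot,exp,<)$. The delicate points I expect to spend the most effort on are all internal to the Key Lemma: guaranteeing sign-decidability at rationals by working at \emph{simple} sign-changes so that exact zeros are avoided, bounding and enumerating the exponential polynomials uniformly via the Khovanskii--Wilkie finiteness estimates, and treating the boundary and non-isolated strata of the cell decomposition---each of which must be carried out effectively in $\vec a$ so that Theorem \ref{recclosed} can be applied.
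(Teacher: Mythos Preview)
Your strategy is sound and in fact yields the stronger conclusion $K_{\mathbb{N}}\preceq\mathbb{R}_{\exp}$, but it is genuinely different from the paper's argument. The paper does \emph{not} invoke o-minimality or definable Skolem functions, and it never uses that $K_{\mathbb{N}}$ sits inside $\mathbb{R}$. Instead it exploits only model completeness in the form ``$T_{\exp}$ is axiomatised by its $\forall\exists$-consequences'' and then verifies each $\forall\exists$-sentence $\forall\vec{x}\,\exists\vec{y}\,\psi$ directly in $K_{\mathbb{N}}$: given $\vec{x}\in K_{\mathbb{N}}$, a rational box on which the strict inequalities in the satisfied disjunct of $\psi$ persist is obtained by continuity (Lemma~\ref{cont}), and a simultaneous zero for the equalities inside that box is produced by an \emph{arithmetically definable} multidimensional bisection, so the witnesses land in $K_{\mathbb{N}}$. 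The payoff of the paper's route is that the whole argument is phrased as a schema of $\mathfrak{L}_{PA}$-sentences true in $\mathbb{N}$, which is exactly what is needed to transfer to arbitrary $M\models Th(\mathbb{N})$ in Theorem~\ref{KMisnice}; your Tarski--Vaught argument, by contrast, leans on the ambient $\mathbb{R}$ and would not extend to nonstandard $M$.

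One caution about your Key Lemma sketch: the assertion that $f(\vec{a})$ is a zero of a \emph{one-variable} exponential polynomial $F_{j}(\vec{a},y)$ is not immediate from model completeness plus cell decomposition. Model completeness only gives $\{f(\vec{a})\}=\{y:\exists\vec{z}\,\theta(\vec{a},y,\vec{z})\}$ with $\theta$ quantifier-free, so the exponential polynomials live in the $(y,\vec{z})$-space; eliminating $\vec{z}$ to get terms in $y$ alone is not generally possible in $\mathbb{R}_{\exp}$. You will have to run the bisection in the full $(y,\vec{z})$-box (which is essentially what the paper does for the $\exists$-block of an $A_{2}$-sentence), or else argue more carefully that the Dedekind cut of $f(\vec{a})$ over $\mathbb{Q}$ is arithmetic in $\vec{a}$ by deciding ``$f(\vec{a})<q$'' via its existential equivalent and continuity. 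Either repair works, but the one-variable reduction as stated is a gap.
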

\begin{proof}
By Wilkie's theorem (see \cite{Wi}), the theory $T_{exp}$ of $\mathbb{R}_{exp}$ is model complete and hence axiomatized by its $A_2$ (i.e. universal existential or $\forall\exists$) -consequences by Proposition $9.3$ from \cite{Sa}. 
It hence suffices to show that every $A_2$-formula that holds in $(\mathbb{R},+,\cdot,exp,<)$ also holds in $(K_{\mathbb{N}},+,\cdot,exp,<)$.\\
So let $\phi$ be an $A_2$-formula in the language of exponential rings that holds in $\mathbb{R}$, say $\phi\equiv\forall{x_1,...,x_n}\exists{y_1,...,y_{m}}(\psi(x_1,...,x_n,y_1,...,y_m)$, where $\psi(x_1,...,x_n,y_1,...,y_m)$ is a Boolean
combination of statements of the form $t(x_1,...,x_n,y_1,..,y_m)=0$ and $t(x_1,...,x_n,y_1,...,y_m)>0$ with $t$ a term in the language of exponential rings.\\
We write $\psi$ in disjunctive normal form, i.e. in the form 
\begin{center} 
$\bigvee_{i=1}^{N}(\bigwedge_{j=1}^{l_{i}}t_{ij}(x_1,...,x_n,y_1,...,y_m)=0 \wedge$ \\ 
$\bigwedge_{j=1}^{k_{i}}t_{ij}^{\prime}(x_1,...,x_n,y_1,...,y_m)>0)$. (*) \end{center}
Note that we can eliminate negation
by rewriting e.g. $(\neg{t=0}\wedge\psi)$ as $(t>0\wedge\psi)\vee(-t>0\wedge\psi)$ or $(\neg{t>0}\wedge\psi)$ as $(t=0\wedge\psi)\vee(-t>0\wedge\psi)$, so we will assume without loss of generality that only positive atomic formulas occur and that
$\psi$ is already written in this form.\\
Now fix $x_{\phi_{1}}^{\vec{v}_{1}},...,x_{\phi_{n}}^{\vec{v}_{n}}\in K_{\mathbb{N}}$. Since $\phi$ holds in $\mathbb{R}$, there exist $r_{1},...,r_{m}\in\mathbb{R}$ such that 
$\mathbb{R}\models\psi(x_{\phi_{1}}^{\vec{v}_{1}},...,x_{\phi_{n}}^{\vec{v}_{n}},r_1,...,r_m)$. Let us assume without loss of generality that it is the first disjunct 
\begin{center}
$\bigwedge_{j=1}^{l_1}t_{1j}(x_{\phi_{1}}^{\vec{v}_{1}},...,x_{\phi_{n}}^{\vec{v}_{n}},r_1,...,r_m)=0\wedge$ \\ $\bigwedge_{j=1}^{k_1}t_{1j}^{\prime}(x_{\phi_{1}}^{\vec{v}_{1}},...,x_{\phi_{n}}^{\vec{v}_{n}},r_1,...,r_m)>0$ 
\end{center}
that is satisfied. By Lemma \ref{cont}, every term in the language of exponential rings gives rise to a continuous function on $K_{\mathbb{N}}$. Hence, the $t_{1j}^{\prime}$ are continuous. 
Therefore, there are rational numbers $q_{1},...,q_{m},p_{1},...,p_{m}$ 
such that $q_{i}<r_{i}<p_{i}$ for all $1\leq i\leq m$, $1\leq j\leq N$ and such that $t_{1j}^{\prime}(x_{\phi_{1}}^{\vec{v}_{1}},...,x_{\phi_{n}}^{\vec{v}_{n}},z_1,...,z_m)>0$ for all 
$(z_{1},...,z_{m})\in\times_{i=1}^{m}[p_{i},q_{i}]$, $1\leq j\leq k_1$.

This holds in particular for all elements of $\mathbb{Q}$. Hence 
\begin{center} 
$\mathbb{N}\models \forall{\vec{v}_{1},...,\vec{v}_{m}}\forall{a_1,...,a_m}\forall{b_1,...,b_m\neq0}\exists\varepsilon>0((\bigwedge_{\iota=1}^{m}(p_{i}<\frac{a_{\iota}}{b_{\iota}}<q_{i})\implies(\bigwedge_{j=1}^{k_{1}}t_{1j}^{\prime}
(x_{\phi_{1}}^{\vec{v}_{1}},...,x_{\phi_{n}}^{\vec{v}_{n}},\frac{a_{1}}{b_{1}},...,\frac{a_m}{b_{m}})>\varepsilon))$
\end{center}
holds for all $n$-tuples of $\mathfrak{L}_{PA}$-formulas.\\
Now we define zeros for the $t_{1j}$ in $\times_{i=1}^{m}[p_{i},q_{i}]$, depending
on $\phi_1,...,\phi_{n}$, but not on the parameters $\vec{v}_{1},...,\vec{v}_{n}$: To do this, we define a sequence $(s_{i})_{i\in\mathbb{N}}$ of $m$-tuples of rational intervalls as follows: $s_{0}:=([p_{j},q_{j}])_{j=1}^{m}$, and, for all $i\geq 0$,
if $s_{i}=([p_{j}^{i},q_{j}^{i}]_{j=1}^{m})$, we let $s_{i+1}$ be the first (in some natural, e.g. lexicographic ordering) of the $2^{m}$ tuples $\{[r_1,s_1],...,[r_m,s_m]\}$ with 
$[r_{j},s_{j}]\in\{[p_{j}^{i},p_{i}^{j}+\frac{q_{i}^{j}}{2}],[p_{i}^{j}+\frac{q_{i}^{j}}{2},q_{i}^{j}]\}$ for all $1\leq j\leq m$ which contains, for every $\bar{n}\in \mathbb{N}$, a tuple of rationals $q_1,...,q_m$ such that 
$|t_{1j}(x_{\phi_{1}}^{\vec{v}_{1}},...,x_{\phi_{n}}^{\vec{v}_{n}},q_1,...,q_m)|<\frac{1}{m}$ for all $1\leq j\leq l_1$.\\
It is easy to see that this sequence of $m$-tuples is definable in $\mathbb{N}$ and converges to a simultanous solution to the $k_1$ equations in question. Hence $\phi$ holds in $K_{\mathbb{N}}$.\\
This implies that the $A_2$-theory of $(\mathbb{R},+,\cdot,exp,<)$ holds in\\ $(K_{\mathbb{N}},+,\cdot,\exp,<)$. By the model completeness of the former, it follows that $\mathbb{R}$ and $K_{\mathbb{N}}$ are elementary
equivalent.
\end{proof}

\begin{thm}{\label{KMisnice}}
 For any $M\models Th(\mathbb{N})$, we have $(K_{M},+,\cdot,exp,<)\equiv_{el}(K_{\mathbb{N}},+,\cdot,exp,<)$.
\end{thm}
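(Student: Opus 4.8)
The plan is to show $(K_M,+,\cdot,exp,<)\equiv_{el}(\mathbb{R},+,\cdot,exp,<)$ for every $M\models Th(\mathbb{N})$; combined with Theorem \ref{reflection} (the case $M=\mathbb{N}$) and transitivity of $\equiv_{el}$ this yields the claim. As in the proof of Theorem \ref{reflection}, I would invoke Wilkie's theorem together with Proposition $9.3$ of \cite{Sa}: since $T_{exp}$ is model complete it is axiomatized by its $A_2$-consequences, so it suffices to verify that every $A_2$-sentence $\phi\equiv\forall\vec{x}\,\exists\vec{y}\,\psi(\vec{x},\vec{y})$ true in $\mathbb{R}_{exp}$ holds in $K_M$, where (as there) $\psi$ is a disjunction over $\iota\le N$ of conjunctions of equations $t_{\iota j}=0$ and strict inequalities $t'_{\iota j}>0$, with $t_{\iota j},t'_{\iota j}$ terms in $+,\cdot,exp$ and only positive atomic formulas occurring.

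The heart of the argument is to run the witness construction of Theorem \ref{reflection} uniformly in the parameters and then transport it from $\mathbb{N}$ to $M$ by elementary equivalence. Fix an $n$-tuple $\vec{\phi}=(\phi_1,\dots,\phi_n)$ of $\mathfrak{L}_{PA}$-formulas. Using the uniform $M$-definability of $+,\cdot,exp$ (the Proposition following the definition of $\text{Def}(M)$) and of the order (the Proposition providing $\phi_{<}$), each condition ``$t_{\iota j}(x_{\phi_1}^{\vec{v}_1},\dots,\vec{z})$ is small'' or ``$t'_{\iota j}(\dots,\vec{z})>\varepsilon$'' for a rational tuple $\vec{z}$ becomes an $\mathfrak{L}_{PA}$-formula in the parameters $\vec{v}=(\vec{v}_1,\dots,\vec{v}_n)$, in $\vec{z}$ (rationals coded as pairs over $M$) and in $\varepsilon$. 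I would then let $\Phi_{\vec{\phi}}(\vec{v})$ be the $\mathfrak{L}_{PA}$-formula asserting that there exist an index $\iota\le N$, a rational box $B=\prod_i[q_i,p_i]$ and a rational $\varepsilon>0$ such that (A) $t'_{\iota j}(x_{\phi_1}^{\vec{v}_1},\dots,\vec{z})>\varepsilon$ for all $j\le k_\iota$ and all rational $\vec{z}\in B$, and (B) for every $\bar{n}$ the box $B$ contains a rational tuple $\vec{z}$ with $|t_{\iota j}(x_{\phi_1}^{\vec{v}_1},\dots,\vec{z})|<1/\bar{n}$ for all $j\le l_\iota$. For each fixed $\vec{v}\in\mathbb{N}$ the $x_{\phi_k}^{\vec{v}_k}$ are honest reals, so since $\phi$ holds in $\mathbb{R}$ some disjunct $\iota$ has a real solution, and the continuity and density argument of Theorem \ref{reflection} produces exactly such a box and $\varepsilon$. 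Hence $\mathbb{N}\models\forall\vec{v}\,\Phi_{\vec{\phi}}(\vec{v})$, and by $M\equiv_{el}\mathbb{N}$ also $M\models\forall\vec{v}\,\Phi_{\vec{\phi}}(\vec{v})$.

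Finally I would extract the witnesses in $K_M$. Given arbitrary $x_1,\dots,x_n\in K_M$, write $x_k=x_{\phi_k}^{\vec{v}_k}$ with $\vec{v}_k\in M$ and apply $\Phi_{\vec{\phi}}(\vec{v})$ in $M$ to obtain $\iota$, a box $B$ and an $\varepsilon$ as in (A),(B). On $B$ I would run the bisection of Theorem \ref{reflection}, i.e.\ the definable sequence $(s_i)_{i\in M}$ of nested sub-boxes each still satisfying (B), whose left endpoints form in each coordinate a definable convergent sequence over $M$, hence an $M$-real; let $\vec{y}\in K_M^m$ be the resulting limit tuple. By Lemma \ref{cont} every term in $+,\cdot,exp$ is continuous on $K_M$, so $t_{\iota j}(\vec{x},\vec{y})=\lim_i t_{\iota j}(\vec{x},\vec{z}^{(i)})=0$ for approximate zeros $\vec{z}^{(i)}\to\vec{y}$ furnished by (B), while (A) and continuity give $t'_{\iota j}(\vec{x},\vec{y})\ge\varepsilon>0$. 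Thus $K_M\models\psi(\vec{x},\vec{y})$, and so $K_M\models\phi$. The main obstacle is the second paragraph: one must check that the witness search of Theorem \ref{reflection}, which over $\mathbb{N}$ quietly used that the $x_{\phi_k}^{\vec{v}_k}$ are honest reals, can be packaged as a single $\mathfrak{L}_{PA}$-formula $\Phi_{\vec{\phi}}$ uniform in the parameters $\vec{v}$ (with the choice of disjunct and box absorbed into existential quantifiers), since only then does elementary equivalence transfer it to $M$. The countable ``for each formula-tuple $\vec{\phi}$'' ranges externally and is handled by treating each $\vec{\phi}$ separately, which is legitimate because every tuple in $K_M$ is presented by some $\vec{\phi}$.
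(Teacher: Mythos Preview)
Your proposal is correct and follows essentially the same route as the paper: package the existence of a suitable disjunct, rational box, and positive lower bound for the $t'$ together with the absence of a positive lower bound for the $|t|$ as a single $\mathfrak{L}_{PA}$-sentence uniform in the parameters, transfer it from $\mathbb{N}$ to $M$ by elementary equivalence, and then extract witnesses in $K_M$ via continuity. Your write-up is in fact more explicit than the paper's about the bisection and limit extraction in $K_M$; the paper simply says the conclusion ``follows from the continuity of the $t$ and the $t^{\prime}$ in $K_M$''.
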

\begin{proof}
By Theorem \ref{reflection}, the theory of $(K_{\mathbb{N}},+,\cdot,exp,<)$ is just $T_{exp}$, the theory of real exponentiation. It hence suffices to show that all $A_2$-formulas that hold in $K_{\mathbb{N}}$
also hold in $K_{M}$. Hence, let $\phi$ be an $A_2$-statement as in the proof of Theorem \ref{reflection} and suppose that $K_{\mathbb{N}}\models\phi$. 
This means that, for all $\phi_1,...,\phi_n \in\mathfrak{L}_{PA}$ and all $\vec{v}_{1},...,\vec{v}_{n}\in K_{\mathbb{N}}$, there are $\phi^{\prime}_{1},...,\phi^{\prime}_{m}\in\mathfrak{L}_{PA}$ and 
$\vec{w}_1,...,\vec{w}_m \in K_{\mathbb{N}}$ such that $K_{\mathbb{N}}\models\psi(x_{\phi_{1}}^{\vec{v}_{1}},...,x_{\phi_{n}}^{\vec{v}_{n}},x_{\phi_{1}^{\prime}}^{\vec{w}_{1}},...,x_{\phi_{m}^{\prime}}^{\vec{w}_{m}})$. Note that statements of
the form (*) above and hence of the form 
\begin{center} $\forall{\vec{v}_{1},...,\vec{v}_{n}}\exists{\vec{w}_{1},...,\vec{w}_{m}}\psi(x_{\phi_{1}}^{\vec{v}_{1}},...,x_{\phi_{n}}^{\vec{v}_{n}},x_{\phi_{1}^{\prime}}^{\vec{w}_{1}},...,x_{\phi_{m}^{\prime}}^{\vec{w}_{m}})$ \end{center}
can be expressed as $\mathfrak{L}_{PA}$-formulas:
Basically, the proof of Theorem \ref{reflection} shows that, for every $A_2$-formula $\phi$ as above true in $K_\mathbb{N}$ and every $n$-tuple of $\mathfrak{L}_{PA}$-formulas $\phi_{1},...,\phi_{n}$, there are $\mathfrak{L}_{PA}$-formulas
$\phi_{1}^{\prime},...,\phi_{m}^{\prime}$ such that 
\begin{center} $\forall{\vec{v}_{1},...,\vec{v}_{n}}\exists{\vec{w}_{1},...,\vec{w}_{m}}\psi(x_{\phi_{1}}^{\vec{v}_{1}},...,x_{\phi_{n}}^{\vec{v}_{n}},x_{\phi_{1}^{\prime}}^{\vec{w}_{1}},...,x_{\phi_{m}^{\prime}}^{\vec{w}_{m}})$ \end{center}
holds in $\mathbb{N}$. Consequently, the same holds in $M$. However, the formulas depend on the rational parameters, whose existence has to be carried over to $M$ as well. We achieve this as follows: Let $\psi_{1},...,\psi_{n}$ be $\mathfrak{L}_{PA}$-formulas.
As $\phi$ holds in $K_{\mathbb{N}}$, we have 
\begin{center}
$\mathbb{N}\models\forall{\vec{v}_{1},...\vec{v}_{n}}\exists{a_1,...,a_n,a_{1}^{\prime},...,a_{n}^{\prime}}\exists{b_1,...,b_n,b_{1}^{\prime},...,b_{n}^{\prime} \neq0}\exists{C>0}$ \\
$\bigvee_{i=1}^{N}
(((\forall{c_1,...,c_n}\forall{d_1,...,d_n \neq0}\bigwedge_{j=1}^{n}\frac{a_{j}}{b_{j}}<\frac{c_{j}}{d_{j}}<\frac{a_{j}^{\prime}}{b_{j}^{\prime}}\implies
(\bigwedge_{j=1}^{k_{i}}t_{ij}^{\prime}(x_{\psi_{1}}^{\vec{v}_{1}},...,x_{\psi_{n}}^{\vec{v}_{n}},\frac{c_1}{d_1},...,\frac{c_n}{d_n})>\frac{1}{C}))\wedge$ \\ 
$(\forall{C^{\prime}>0}\exists{c_1,...,c_n}\exists{d_1,...,d_n \neq0}((\bigwedge_{j=1}^{n}\frac{a_{j}}{b_{j}}<\frac{c_{j}}{d_{j}}<\frac{a_{j}^{\prime}}{b_{j}^{\prime}})\wedge$ \\
$(\bigwedge_{j=1}^{l_{i}}|t_{ij}(x_{\psi_{1}}^{\vec{v}_{1}},...,x_{\psi_{n}}^{\vec{v}_{n}},\frac{c_1}{d_1},...,\frac{c_n}{d_n})|<\frac{1}{C^{\prime}})))))$
\end{center}
(i.e. for all choices of the parameters, there are a positive $\varepsilon$ and a rational box $B$ such that, for at least one of the disjoints in $\psi$ (the quantifier-free part of $\phi$, see the proof of Theorem \ref{reflection}),
all $t^{\prime}$ are bigger than $\varepsilon$ in $B$ while the absolute values of the $t$ at rational numbers in $B$ have no positive lower bound).\\
The same statement hence holds in $M$. That $\phi$ holds in $K_{M}$ now follows from the continuity of the $t$ and the $t^{\prime}$ in $K_{M}$.

\end{proof}

We note the following useful consequence of the proof of Theorem \ref{KMisnice}:

\begin{lemma}{\label{uniformisation}}(The $A_2$-uniformisation lemma)\\
Let $K_{M}\models\forall{x_{1},x_{2},...,x_{m}}\exists{y_{1},...,y_{n}}\psi(x_{1},...,x_{m},y_{1},...,y_{n},\vec{v})$, where $\psi$ is quantifier-free and $\vec{v}\subseteq K_{M}$ is finite. 
Then, for every $m$-tuple $(\phi_1,...,\phi_m)$ of formulas in the language $\mathfrak{L}_{exp}$ of ordered exponential rings, there exists an $n$-tuple $(\psi_1,...,\psi_n)$ of $\mathfrak{L}_{exp}$-formulas
such that $K_{M}\models\forall{\vec{v}_{1},...,\vec{v}_{m}}\exists{\vec{w}_{1},...,\vec{w}_{n}}\psi(x_{\phi_{1}}^{\vec{v}_{1}},...,x_{\phi_{m}}^{\vec{v}_{m}},x_{\psi_{1}}^{\vec{w}_{1}},...,x_{\psi_{n}}^{\vec{w}_{n}},\vec{v})$.
\end{lemma}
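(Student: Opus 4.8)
The plan is to reread the construction in the proof of Theorem \ref{KMisnice}, which already produces the existential witnesses in a form uniform in the parameters; the only change is that here $K_M\models\phi$ is assumed directly (with parameters $\vec{v}\subseteq K_M$), so the relevant arithmetical statement will hold in $M$ by continuity and density rather than by transfer from $\mathbb{N}$. First I would bring $\psi$ into disjunctive normal form and remove negations exactly as in the proof of Theorem \ref{reflection}, so that each disjunct is a conjunction of equalities $t_{ij}=0$ and strict inequalities $t'_{ij}>0$ among $\mathfrak{L}_{exp}$-terms. Fixing an arbitrary tuple $(\phi_1,\dots,\phi_m)$ of $\mathfrak{L}_{exp}$-formulas and arbitrary parameters $\vec{v}_1,\dots,\vec{v}_m$, the hypothesis supplies witnesses $y_1,\dots,y_n\in K_M$ satisfying some disjunct, say the $i$-th.

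Next I would extract a rational box realising this disjunct. Since the $t'_{ij}$ are continuous on $K_M$ (Lemma \ref{cont}) and $\text{ff}(M)$ is dense in $K_M$ (because $M$ is an $IP$ of $K_M$), there is a box $B=\times_{j=1}^{n}[a_j/b_j,a'_j/b'_j]$ with corners in $\text{ff}(M)$ containing $(y_1,\dots,y_n)$ on which every $t'_{ij}$ stays above some positive $1/C\in\text{ff}(M)$; continuity and density likewise give, for every $\bar n$, rationals in $B$ making each $|t_{ij}|$ smaller than $1/\bar n$. This is precisely the box-statement displayed in the proof of Theorem \ref{KMisnice}, now seen to hold in $M$ as an $\mathfrak{L}_{PA}$-statement (the $M$-reals $x_{\phi_i}^{\vec{v}_i}$, the parameters $\vec{v}$, and the operations $+,\cdot,exp$ being all definable over $M$) as a direct consequence of $K_M\models\phi$.

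From this box-statement I would then run the nested-interval construction of Theorem \ref{KMisnice}: definably in $M$, build $s_0\supseteq s_1\supseteq\cdots$, at stage $i$ selecting the lexicographically first of the $2^n$ halved sub-boxes that still contains, for every $\bar n$, rationals making all the $|t_{ij}|$ below $1/\bar n$. By continuity of the $t_{ij}$ this sequence converges to a genuine simultaneous zero lying in $B$, hence to a point satisfying the whole $i$-th disjunct. The decisive point is that the selection rule, and hence the limiting coordinates, depend only on $\phi_1,\dots,\phi_m$, $\psi$ and $\vec{v}$, not on the particular $\vec{v}_1,\dots,\vec{v}_m$; thus the coordinates are $M$-reals cut out by fixed formulas $\psi_1,\dots,\psi_n$ with parameters $\vec{w}_1,\dots,\vec{w}_n$ obtained definably from the $\vec{v}_i$. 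This delivers the asserted uniformisation.

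I expect the main obstacle to be discharging the definability and uniformity requirements simultaneously: one must check that the predicate ``this sub-box contains rationals making all $|t_{ij}|$ arbitrarily small'' is arithmetical in the codes of the $\phi_i$ and the parameters, so that the bisection is $\mathfrak{L}_{PA}$-definable over $M$, and that the resulting $\psi_j$ are genuinely independent of $\vec{v}_1,\dots,\vec{v}_m$. Establishing that the bisection converges to an actual common zero of the $t_{ij}$, rather than merely to a sequence along which they tend to zero, is the other delicate point, and this is exactly where continuity in $K_M$ (Lemma \ref{cont}) is indispensable.
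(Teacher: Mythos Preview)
Your proposal is correct and follows essentially the same approach as the paper: the paper's own proof is the single sentence ``The proof of Theorem \ref{KMisnice} shows how to obtain such formulas,'' and what you have written is precisely an unpacking of that construction (DNF, rational box via continuity and density, definable nested-interval bisection yielding witnesses whose defining formulas depend only on the input formulas and not on the parameters). Your observation that, because the hypothesis is stated directly for $K_M$ with parameters $\vec{v}\subseteq K_M$, the box-statement is obtained in $M$ by continuity and density rather than by transfer from $\mathbb{N}$, is the one genuine adjustment needed beyond a literal rereading of Theorem \ref{KMisnice}, and you have identified it correctly.
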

\begin{proof}
The proof of Theorem \ref{KMisnice} shows how to obtain such formulas.
\end{proof}

\begin{corollary}
For every $M\models Th(\mathbb{N})$, we have 
\begin{center} $(K_{M},+,\cdot,exp,<)\equiv_{el}(\mathbb{R},+,\cdot,exp,<)$.\end{center}
 Consequently, every model of true arithmetic is an $IP$ of a real closed exponential field modelling $T_{exp}$.
\end{corollary}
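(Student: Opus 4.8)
The plan is to read the corollary off the three principal results already established. For the elementary-equivalence assertion, one simply chains Theorem~\ref{KMisnice} with Theorem~\ref{reflection}: the former gives $(K_{M},+,\cdot,exp,<)\equiv_{el}(K_{\mathbb{N}},+,\cdot,exp,<)$, the latter gives $(K_{\mathbb{N}},+,\cdot,exp,<)\equiv_{el}(\mathbb{R},+,\cdot,exp,<)$, and since $\equiv_{el}$ is an equivalence relation the desired $(K_{M},+,\cdot,exp,<)\equiv_{el}(\mathbb{R},+,\cdot,exp,<)$ follows by transitivity. This is the entire content of the first displayed statement.

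For the second (``consequently'') assertion I would argue as follows. The elementary equivalence just obtained shows in particular that $K_{M}\models T_{exp}$, since $T_{exp}=Th(\mathbb{R}_{exp})$. Moreover $K_{M}$ has already been shown to be real closed, and $exp_{M}$ is a genuine exponential on it (it is monotonic and continuous by Lemma~\ref{expcont}, and it satisfies every first-order property of the real exponential by the elementary equivalence). Thus $K_{M}$ is a real closed exponential field modelling $T_{exp}$, and it only remains to exhibit $M$ as an integer part of it.

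To this end I would invoke the lemma establishing that $M_{K}$ is an integer part of $K_{M}$, together with the observation that the map $n\mapsto n_{K}$ is not merely an elementary embedding but an isomorphism of ordered semirings from $M$ onto $M_{K}$: it is injective because distinct constants give non-equivalent sequences, it is surjective onto $M_{K}$ by the very definition of $M_{K}$, and it preserves $0,1,+,\cdot$ and $<$. Identifying $M$ with $M_{K}$ along this isomorphism, $M$ itself is an integer part of $K_{M}$, completing the argument.

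I expect the only point demanding care to be precisely this last transition from $M_{K}$ back to $M$. The earlier proposition recorded only $M_{K}\equiv_{el}M$, whereas the integer-part conclusion for $M$ as stated requires the sharper fact that $n\mapsto n_{K}$ is an isomorphism; fortunately this is immediate from the definitions, so no genuine difficulty arises, and the corollary follows.
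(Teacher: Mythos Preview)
Your proposal is correct and follows exactly the paper's own argument: the paper's proof simply reads ``Immediate from Theorem~\ref{reflection}, Theorem~\ref{KMisnice} and the fact that $M$ is an $IP$ of $K_{M}$ that we proved above.'' Your additional remark that one must identify $M$ with $M_{K}$ via the isomorphism $n\mapsto n_{K}$ is a legitimate clarification the paper silently takes for granted.
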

\begin{proof}
 Immediate from Theorem \ref{reflection}, Theorem \ref{KMisnice} and the fact that $M$ is an $IP$ of $K_{M}$ that we proved above.
\end{proof}

\section{A counterexample for bounded arithmetic}

The results of the preceeding sections about true arithmetic and Peano Arithmetic stand in sharp contrast with the situation for the weaker fragment of bounded arithmetic ($I\Delta_{0}$). In this case, already quite weak 
notions of exponential may fail to occur.

\begin{thm}
 There is a model $M\models I\Delta_{0}$ such that, for no $RCF$ $K$ which has $M$ as an $IP$, there exists $g:K\rightarrow K$ such that \begin{center}$(K,+,\cdot,g,<)\equiv_{el}(\mathbb{R},+,\cdot,exp,<)$.\end{center}
\end{thm}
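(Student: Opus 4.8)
The plan is to show that \emph{any} model $M\models I\Delta_0+\neg EXP$ already witnesses the theorem; such $M$ exist since $I\Delta_0\nvdash EXP$ (Parikh, Paris--Wilkie). So I fix such an $M$, choose by $\neg EXP$ an element $a\in M$ --- necessarily infinite --- for which $2^a$ does not exist in $M$, and set $I:=\{n\in M: 2^n \text{ exists in } M\}$. Note that $I$ is closed under successor because multiplication is total in $M$, and $a\notin I$. I then suppose, for contradiction, that $M$ is an integer part of an $RCF$ $K$ carrying a function $g$ with $(K,+,\cdot,g,<)\equiv_{el}(\R,+,\cdot,\exp,<)$.

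Since $\R_{\exp}$ defines the base-$2$ exponential $h(x):=\exp(x\cdot\log 2)$, which provably satisfies $h(0)=1$, $h(x+1)=2h(x)$ and strict monotonicity, the same first-order properties hold for the corresponding total map $h$ on $K$ (if $\exp$ is already base $2$ I simply take $h=g$); in particular $h(a)$ is the $K$-element $2^a$. As $M$ is an $IP$ of $K$, the floor $c:=\lfloor h(a)\rfloor\in M$ exists, and from $1\le c\le h(a)<c+1\le 2c$ one reads off in $K$ the bracketing $2^{a-1}<c\le 2^a$. The decisive step is to extract information about $c$ \emph{inside} $M$: the most-significant-bit operation $c\mapsto 2^{\lfloor\log_2 c\rfloor}$, i.e.\ ``the largest power of $2$ that is $\le c$'', is $\Delta_0$-definable and provably total in $I\Delta_0$, so it yields an element $P\in M$ with $P=2^{L}$ for $L:=\lfloor\log_2 c\rfloor\in M$ and $P\le c<2P$. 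Comparing $P\le c<2P$ with the bracketing $2^{a-1}<c\le 2^a$ forces the power of two $P$ into $\{2^{a-1},2^{a}\}$, that is $L\in\{a-1,a\}$. Either way $2^{L}=P\in M$ shows $L\in I$, whence $a\in I$ by closure of $I$ under successor, contradicting $2^a\notin M$. Hence no such $K$ and $g$ can exist.

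The main obstacle --- and the point the argument is designed to finesse --- is that elementary equivalence with $\R_{\exp}$ gives no direct control over how $g$ interacts with $M$: since $M$ is an infinite discrete set, it is not definable in the o-minimal structure $K$, so $T_{\exp}$ cannot ``see'' $M$, and the naive hope of transferring ``$\exp$ of an integer is an integer'' to conclude $h(a)\in M$ is simply unavailable (a priori the fractional part of $h(a)$ need not vanish). The device that circumvents this is to pass to the $M$-side after taking a single floor: the cheap, $\Delta_0$-total most-significant-bit function pins the largest power of $2$ below $c$ to $2^{a-1}$ (or $2^a$), and one application of successor-closure of $I$ then revives $2^a$ inside $M$. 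The same computation incidentally forces $h(a)\in M$ after all, so the apparent ``escape'' of the fractional part is illusory; this is also why the dividing line is exactly $EXP$, consistent with the positive results for $Th(\N)$, where exponentiation is total.
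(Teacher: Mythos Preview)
Your argument has a genuine gap at the step where you ``compare $P\le c<2P$ with the bracketing $2^{a-1}<c\le 2^a$'' to conclude $L\in\{a-1,a\}$. The bracketing $2^{a-1}<c\le 2^a$ lives in $K$ and means $h(a-1)<c\le h(a)$, where $h$ is the base-$2$ exponential of $K$; on the other hand $P=2^{L}$ refers to the \emph{$M$-internal} power of $2$, i.e.\ the pair $(L,P)$ satisfies the $\Delta_0$-graph of exponentiation in $M$. There is no reason these two exponentials agree on nonstandard arguments. From $h(0)=1$ and $h(x+1)=2h(x)$ you obtain $h(n)=2^n$ for every \emph{standard} $n$ by external induction, but the statement ``$h(n)$ equals the $M$-value $2^n$'' is not expressible in the language of $M$ (it mentions $h$), so $\Delta_0$-induction in $M$ cannot extend it to nonstandard $n\in I$; and since $M$ is not definable in the o-minimal structure $(K,h)$ --- a point you yourself stress --- nothing transfers from $\R_{\exp}$ either. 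All your inequalities actually yield is $h(a-2)<P\le h(a)$ in $K$, which pins the $K$-logarithm of $P$ to the interval $(a-2,a]$; it says nothing about $L$. Hence the conclusion $L\in\{a-1,a\}$, and with it $a\in I$, is unjustified.

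The paper avoids this difficulty by choosing $M$ more restrictively: it takes a \emph{bounded} nonstandard model of $I\Delta_0$, i.e.\ one containing a nonstandard $a$ with $\{a^i:i\in\omega\}$ cofinal in $M$ (hence in $K$). Then only first-order properties of $g$ that transfer directly from $\R_{\exp}$ are needed --- monotonicity, $g(x)>x$ for $x>1$, and $g(x+y)=g(x)g(y)$ --- together with external induction on $\omega$, to obtain $g(a^2)>g(ai)=g(a)^i>a^i$ for every standard $i$, forcing $g(a^2)$ above a cofinal subset of $K$. No comparison between $g$ and the $M$-internal exponential is ever attempted, which is precisely the identification your argument requires but cannot justify. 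Your stronger opening claim, that \emph{every} model of $I\Delta_0+\neg EXP$ already witnesses the theorem, is therefore not established by this argument.
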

\begin{proof}
 Let $M$ be a bounded nonstandard model of $I\Delta_{0}$, i.e. there exists a nonstandard element $a\in M$ such that $\{a^{i}|i\in\omega\}$ is cofinal in $M$. As $(\mathbb{R},+,\cdot,exp,<)\models\forall{x>1}(exp(x)>x)$,
we have that $g(a)>a$. Also, $exp$ is monotonic and hence $g$ is monotonic. It follows that $g(a^{2})>a^{i}$ for all $i\in\omega$, as, in fact, we have $g(a^{2})=g(aa)>g(ax)$ for all $x<a$, and since $a$ is nonstandard, this holds in particular
for all finite $x$. But as $\{a^{i}|i\in\omega\}$ is cofinal in $K$, such an element does not exist in $K$, hence exponentiation is not total in $K$, a contradiction.
\end{proof}

In fact, we can strengthen this further. The following definition comes from \cite{Ku}.

\begin{defini}
Let $K$ be an $RCF$. A $GA$-exponential $f$ on $K$ is an isomorphism between $(K,+,<)$ and $(K^{>0},\cdot,<)$ such that, for all $a\in K$ and $n\in\mathbb{N}$, we have that $a\geq n^{2}$ implies $f(a)>a^{n}$.
\end{defini}

\begin{thm}
 There is a model $M\models I\Delta_{0}$ such that, for no $RCF$ $K$ which has $M$ as an $IP$, there exists $g:K\rightarrow K$ such that $g$ is a $GAT$-exponential on $K$.
\end{thm}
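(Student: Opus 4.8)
The plan is to recycle the very same model $M$ used in the proof of the previous theorem, namely a bounded nonstandard model of $I\Delta_{0}$: a model in which there is a nonstandard $a\in M$ such that $\{a^{i}\mid i\in\omega\}$ is cofinal in $M$. The point of passing to the stronger statement is that the defining inequality of a $GA$-exponential already builds in exactly the growth rate that forces a contradiction, so that, in contrast to the previous proof, I will not need to invoke elementary equivalence with $(\mathbb{R},+,\cdot,exp,<)$ nor monotonicity of $g$; the structural clause ``$a\geq n^{2}\implies f(a)>a^{n}$'' does all the work by itself.

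Concretely, I would suppose toward a contradiction that $K$ is an $RCF$ having $M$ as an $IP$ and that $g\colon K\to K$ is a $GA$-exponential. First I would observe that, since $a$ is nonstandard (hence larger than every standard natural number), we have $a\geq n^{2}$ for every standard $n\in\omega$, because $n^{2}$ is a standard element of $M$. Applying the defining property of a $GA$-exponential with this fixed $n$ then yields $g(a)>a^{n}$, and since $n\in\omega$ was arbitrary, $g(a)>a^{n}$ holds simultaneously for all $n\in\omega$.

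Next I would propagate the cofinality of $\{a^{i}\mid i\in\omega\}$ from $M$ to $K$: because $M$ is an integer part of $K$, it is cofinal in $K$ (every $x\in K$ lies below $\lfloor x\rfloor+1\in M$), and composing the two cofinalities shows that $\{a^{i}\mid i\in\omega\}$ is cofinal in $K$. But $g(a)$ is an element of $K$ that exceeds every $a^{n}$, contradicting this cofinality. Hence no such $g$ can exist, which is exactly the assertion.

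Since the argument is essentially immediate once the model is in place, there is no serious obstacle; the only points requiring (routine) care are the verification that $a\geq n^{2}$ for all standard $n$ and the chaining of the two cofinality statements. The conceptual content lies entirely in the observation that the $GA$-exponential condition is a purely order-theoretic growth requirement, so that the counterexample of the preceding theorem survives the weakening of the hypothesis from ``$(K,+,\cdot,g,<)\equiv_{el}(\mathbb{R},+,\cdot,exp,<)$'' to ``$g$ is a $GA$-exponential''.
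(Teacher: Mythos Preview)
Your argument is correct and matches the paper's own proof essentially step for step: take a bounded nonstandard $M\models I\Delta_0$ with nonstandard $a$ such that $\{a^i\mid i\in\omega\}$ is cofinal, note $a\geq n^2$ for all standard $n$, apply the $GA$-growth condition to get $g(a)>a^n$ for all $n$, and contradict cofinality in $K$. Your added remarks (the explicit cofinality transfer from $M$ to $K$ via the integer-part property, and the observation that neither elementary equivalence nor monotonicity is needed here) are welcome clarifications but do not change the strategy.
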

\begin{proof}
Let $M$ again be a bounded nonstandard model of $I\Delta_{0}$ as above and assume for a contradiction that $f$ is a $GA$-exponential on $M$. Let $a\in M$ be nonstandard such that $\{a^{i}|i\in\mathbb{N}\}$ is cofinal
in $M$ and hence in $K$. As $a$ is nonstandard, we have $a>n^{2}$ for all $n\in\mathbb{N}$. Hence, since $f$ is a $GA$-exponential, we have $f(a)>a^{n}$ for every $n\in\mathbb{N}$. But this implies that $f(a)$ is strictly
greater than every element of $K$, a contradiction.
\end{proof}

\section{Further work}

The arguments from section $3$ can be extended to any functions that are definable over $M$ and preserve the model-completeness.

\end{document}